\newcommand\precdot{\mathrel{\ooalign{$\prec$\cr
  \hidewidth\raise0.001ex\hbox{$\cdot\mkern0.6mu$}\cr}}}
\newtheorem{theorem}{Theorem}[section]
\newtheorem{prop}[theorem]{Proposition}
\newtheorem{lemma}[theorem]{Lemma}
\newtheorem{cor}[theorem]{Corollary}
\theoremstyle{definition}
\newtheorem{ex}[theorem]{Example}
\newtheorem{prob}[theorem]{Problem}
\newtheorem{defin}[theorem]{Definition}
\theoremstyle{remark}
\newtheorem*{remark}{Remark}
\DeclareMathOperator{\supp}{Supp}
\newcommand{\mc}{\mathcal}
\renewcommand{\tilde}{\widetilde}
\newcommand{\Inv}{\mathrm{Inv}}
\title{Minimal elements for the limit weak order on affine Weyl groups}
\author{Christian Gaetz}
\thanks{C.G. is supported by a National Science Foundation Graduate Research Fellowship under Grant No. 1122374.}
\address{Department of Mathematics, Massachusetts Institute of Technology, Cambridge, MA 02139}
\email{\href{mailto:gaetz@mit.edu}{{\tt gaetz@mit.edu}}}
\author{Yibo Gao}
\email{\href{mailto:gaoyibo@mit.edu}{{\tt gaoyibo@mit.edu}}}
\date{\today}
\begin{document}
\begin{abstract}
The limit weak order on an affine Weyl group was introduced by Lam and Pylyavskyy \cite{lam-pylyavskyy} in their study of total positivity for loop groups \cite{lam-pylyavskyy-part1}.  They showed that in the case of the affine symmetric group the minimal elements of this poset coincide with the infinite fully commutative reduced words and with infinite powers of Coxeter elements.  We answer several open problems raised there by classifying minimal elements in all affine types and relating these elements to the classes of fully commutative and Coxeter elements. Interestingly, the infinite fully commutative elements correspond to the minuscule and cominuscule nodes of the Dynkin diagram, while the infinite Coxeter elements correspond to a single node, which we call the \emph{heavy} node, in all affine types other than type $A$.
\end{abstract}

\maketitle

\section{Introduction}
\label{sec:intro}

An infinite word $s_{i_1}s_{i_2}\cdots $ in the simple generators of an infinite Coxeter group $\tilde{W}$ (see Section~\ref{sec:Coxeter-background} for background) is called an \emph{infinite reduced word} if all of its finite prefixes $s_{i_1}\cdots s_{i_k}$ are reduced words in $\tilde{W}$; we will identify such a word with its sequence $\mathbf{i}=i_1 i_2 \cdots$ of indices.  Associated to $\mathbf{i}$ is an \emph{inversion set} $\Inv(\mathbf{i})$, a subset of the set of reflections of $\tilde{W}$, which induces an equivalence relation on the set of infinite reduced words: $[\mathbf{i}]=[\mathbf{j}]$ if and only if $\Inv(\mathbf{i})=\Inv(\mathbf{j})$.  An equivalence class of infinite reduced words is called a \emph{limit element} of $\tilde{W}$.

The \emph{limit weak order} for $\tilde{W}$, introduced by Lam and Pylyavskyy \cite{lam-pylyavskyy} is the partial order $(\tilde{\mc{W}},\leq)$ on the set of limit elements, with order given by containment of inversion sets.  In \cite{lam-pylyavskyy} this order (conjecturally) encodes the containment relations between certain strata in the totally positive space studied there, while Lam and Thomas show in \cite{lam-thomas} that $\tilde{\mc{W}}$ encodes the closure relations among components of the Tits boundary of $\tilde{W}$.  In both instances, understanding the minimal elements in the limit weak order is of significant interest.  In the case when $\tilde{W}$ is the affine symmetric group, Lam and Pylyavskyy show that the minimal elements coincide with two other important classes of elements: fully commutative limit elements and infinite powers of Coxeter elements.

\begin{theorem}[Lam and Pylyavskyy \cite{lam-pylyavskyy}]
\label{thm:type-A}
Let $\tilde{W}$ be the affine symmetric group, then the following are equivalent for an infinite reduced word $\mathbf{i}$:
\begin{enumerate}
    \item $[\mathbf{i}]$ is minimal in $\tilde{\mc{W}}$,
    \item $[\mathbf{i}]$ is fully commutative,
    \item $[\mathbf{i}]=[c^{\infty}]$ for a Coxeter element $c$ of $\tilde{W}$.
\end{enumerate}
\end{theorem}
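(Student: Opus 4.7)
The plan is to establish the cycle of implications $(3) \Rightarrow (1) \Rightarrow (2) \Rightarrow (3)$, working with the realization of $\tilde{W} = \tilde{S}_n$ as periodic permutations of $\mathbb{Z}$ and identifying reflections with affine transpositions. The inversion set of an infinite reduced word forms an infinite biclosed subset of positive affine roots, and the limit weak order is by inclusion of these sets, so the task is to characterize the minimal elements under this inclusion.

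For $(3) \Rightarrow (1)$, I would compute $\Inv(c^{\infty})$ directly. Each Coxeter element $c$ of $\tilde{A}_{n-1}$ corresponds to a cyclic ordering of $\{s_0, \dots, s_{n-1}\}$, equivalently an orientation of the affine Dynkin cycle, and $\Inv(c^{\infty})$ consists of exactly those affine transpositions aligned with this orientation. To verify minimality, one shows that any strict biclosed subset of $\Inv(c^{\infty})$ can only be the inversion set of a finite element, which is excluded since a limit element must have an infinite inversion set.

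For $(1) \Rightarrow (2)$, suppose $[\mathbf{i}]$ is minimal but not fully commutative. Then some reduced word in the equivalence class contains a braid substring $s_i s_j s_i$ with $|i-j|=1$ after a finite prefix $\mathbf{u}$; write $\mathbf{i} = \mathbf{u}\, s_i s_j s_i\, \mathbf{v}$. The plan is to construct a strictly smaller limit element by replacing the tail $s_i s_j s_i\, \mathbf{v}$ with an alternative infinite reduced continuation of $\mathbf{u}$ whose inversion set omits the reflection contributed by the innermost $s_i$ of the braid triple. The replaced tail should still be reduced relative to $\mathbf{u}$ (using the fact that applying the braid relation swaps whether the innermost reflection is contributed by the tail or by an earlier position), so the new inversion set is strictly smaller and remains biclosed, contradicting minimality.

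The main obstacle is $(2) \Rightarrow (3)$: classifying all fully commutative infinite reduced words. I would first observe that since every proper parabolic of $\tilde{A}_{n-1}$ is a finite Weyl group, every simple generator must appear infinitely often in $\mathbf{i}$. The fully commutative condition, together with the cyclic structure of the affine Dynkin diagram, should then force exactly one occurrence of each other generator between any two consecutive occurrences of $s_i$, arranged in a globally consistent cyclic pattern that may be read off as a Coxeter element $c$ satisfying $[\mathbf{i}] = [c^{\infty}]$. I expect the bulk of the work here: verifying that there is no aperiodic or otherwise non-Coxeter arrangement consistent with the no-$s_i s_j s_i$ condition across \emph{all} equivalent reduced words, using the rigidity imposed by the non-commuting adjacencies in the affine cycle.
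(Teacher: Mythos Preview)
The paper does not prove this theorem: it is stated as a result of Lam--Pylyavskyy and cited without proof, serving as motivation for the rest of the paper. So there is no proof here against which to compare your proposal.

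That said, your cycle has a genuine gap at $(1)\Rightarrow(2)$. You assert that from $\mathbf{i}=\mathbf{u}\,s_is_js_i\,\mathbf{v}$ one can produce ``an alternative infinite reduced continuation of $\mathbf{u}$ whose inversion set omits the reflection contributed by the innermost $s_i$'', but you neither construct this continuation nor show that its inversion set is contained in $\Inv(\mathbf{i})$. Removing a single reflection from a biclosed set is usually not biclosed, and an arbitrary extension of $\mathbf{u}$ may well introduce inversions not present in $\Inv(\mathbf{i})$; so the claimed strictly smaller limit element does not come for free. The paper's Proposition~\ref{prop:braid-limit-weak-order} and Corollary~\ref{cor:fully-commutative-is-minimal} do give the converse direction $(2)\Rightarrow(1)$ uniformly (fully commutative implies minimal, since commutation moves alone cannot shed inversions), but the implication you want is the type~$A$--specific one and needs a real construction, typically an explicit infinite sequence of braid moves that pushes a letter to infinity.

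A cleaner route, closer in spirit to how this paper is organized, is to prove $(1)\Leftrightarrow(3)$ directly via Proposition~\ref{prop:minimal}: minimal elements are exactly the $[t_{wk_i\omega_i^{\vee}}^{\infty}]$, and in type $A$ every fundamental coweight direction is the translation part of some Coxeter power (all nodes are Coxeter nodes in the sense of Section~\ref{sec:infinite-coxeter}). Then $(2)\Rightarrow(1)$ follows from Corollary~\ref{cor:fully-commutative-is-minimal}, and $(1)\Rightarrow(2)$ reduces to checking that each $[c^{\infty}]$ is fully commutative, which is your $(2)\Rightarrow(3)$ analysis run in reverse and is much more concrete than the contrapositive argument you sketched.
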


As natural extensions of Theorem~\ref{thm:type-A}, Lam and Pylyavskyy posed the following open problems:

\begin{prob}[Lam and Pylyavskyy \cite{lam-pylyavskyy}]
\label{prob:describe-words}
Describe, in terms of infinite reduced words, the minimal elements in limit weak order for all affine Weyl groups.
\end{prob}

\begin{prob}[Lam and Pylyavskyy \cite{lam-pylyavskyy}]
\label{prob:are-minimal-fully-commutative}
Are all minimal elements in limit weak order fully commutative?
\end{prob}

In this paper\footnote{An extended abstract of this work has been submitted to the proceedings of FPSAC 2021.} we describe complete resolutions of Problems~\ref{prob:describe-words} and \ref{prob:are-minimal-fully-commutative} and give further extensions of Theorem~\ref{thm:type-A}:  
\begin{itemize}
    \item In Section~\ref{sec:background} we cover needed background material.
    \item In Section~\ref{sec:translation} we note that, when $\tilde{W}$ is an affine Weyl group with corresponding finite Weyl group $W$, the minimal elements of $\tilde{\mc{W}}$ coincide with infinite powers of translations by multiples of $W$-conjugates of fundamental coweights.  We give a general, type-uniform procedure for generating infinite reduced words corresponding to these elements; the resulting classification of minimal elements, which resolves Problem~\ref{prob:describe-words}, is given in Table~\ref{tab:all-minimal}.
    \item Although none of the three equivalences in Theorem~\ref{thm:type-A} continues to hold in general affine Weyl groups, we show that infinite fully commutative elements and infinite powers of Coxeter elements are still minimal in $\tilde{\mc{W}}$.  Therefore it makes sense to ask for which fundamental coweights $\omega_i^{\vee}$ the corresponding infinite translation element is fully commutative or is a power of a Coxeter element; the answer is depicted in Figure~\ref{fig:main-fig}.  
    \item In fact we show in Section~\ref{sec:infinite-coxeter} that, except in type $A$, there is a unique $\omega_i^{\vee}$ corresponding to the Coxeter elements, and we give a simple rule for identifying the corresponding node in the Dynkin diagram, which we call the \emph{heavy node}. We also give a description of the heavy node in terms of the action of a bipartite Coxeter element on the highest root which may be of independent interest.
    \item In Section~\ref{sec:fully-commutative} we give a uniform proof that the fundamental coweights whose infinite translation elements are fully commutative are exactly those which are minuscule or cominuscule.  In particular, Problem~\ref{prob:are-minimal-fully-commutative} has a negative answer except in type $A$.
    \item Finally, Section~\ref{sec:densities} gives an alternative argument for the classification of infinite fully commutative words in terms of the ``densities" of generators $s_i$ appearing in the word.
\end{itemize}  
 
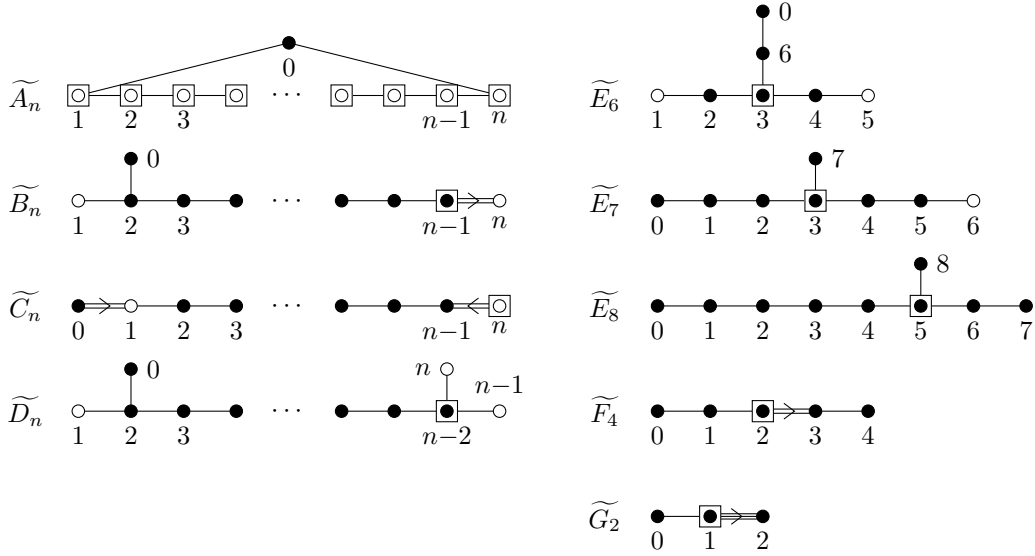
\begin{figure}
\centering
\begin{tikzpicture}[scale=0.7]
\draw(3,0)--(0,0)--(4,1)--(8,0)--(5,0);
\node[draw,shape=rectangle,fill=white,scale=1.2] at (0,0) {};
\node[draw,shape=rectangle,fill=white,scale=1.2] at (1,0) {};
\node[draw,shape=rectangle,fill=white,scale=1.2] at (2,0) {};
\node[draw,shape=rectangle,fill=white,scale=1.2] at (3,0) {};
\node[draw,shape=rectangle,fill=white,scale=1.2] at (5,0) {};
\node[draw,shape=rectangle,fill=white,scale=1.2] at (6,0) {};
\node[draw,shape=rectangle,fill=white,scale=1.2] at (7,0) {};
\node[draw,shape=rectangle,fill=white,scale=1.2] at (8,0) {};
\node[draw,shape=circle,fill=white,scale=0.5][label=below: {$1$}] at (0,0) {};
\node[draw,shape=circle,fill=white,scale=0.5][label=below: {$2$}] at (1,0) {};
\node[draw,shape=circle,fill=white,scale=0.5][label=below: {$3$}] at (2,0) {};
\node[draw,shape=circle,fill=white,scale=0.5][label=below: {}] at (3,0) {};
\node[draw,shape=circle,fill=white,scale=0.5][label=below: {}] at (5,0) {};
\node[draw,shape=circle,fill=white,scale=0.5][label=below: {}] at (6,0) {};
\node[draw,shape=circle,fill=white,scale=0.5][label=below: {$n{-}1$}] at (7,0) {};
\node[draw,shape=circle,fill=white,scale=0.5][label=below: {$n$}] at (8,0) {};
\node[draw,shape=circle,fill=black,scale=0.5][label=below: {$0$}] at (4,1) {};
\node at (4,0) {$\cdots$};
\node at (-1,0) {$\widetilde{A_n}$};

\draw(0,-2)--(3,-2);
\draw(5,-2)--(7,-2);
\draw(7,-2.04)--(8,-2.04);
\draw(7,-1.96)--(8,-1.96);
\draw(1,-1.2)--(1,-2);
\node[draw,shape=circle,fill=white,scale=0.5][label=below: {$1$}] at (0,-2) {};
\node[draw,shape=circle,fill=black,scale=0.5][label=below: {$2$}] at (1,-2) {};
\node[draw,shape=circle,fill=black,scale=0.5][label=below: {$3$}] at (2,-2) {};
\node[draw,shape=circle,fill=black,scale=0.5][label=below: {}] at (3,-2) {};
\node[draw,shape=circle,fill=black,scale=0.5][label=below: {}] at (5,-2) {};
\node[draw,shape=circle,fill=black,scale=0.5][label=below: {}] at (6,-2) {};
\node[draw,shape=rectangle,fill=white,scale=1.2] at (7,-2) {};
\node[draw,shape=circle,fill=black,scale=0.5][label=below: {$n{-}1$}] at (7,-2) {};
\node[draw,shape=circle,fill=white,scale=0.5][label=below: {$n$}] at (8,-2) {};
\node[draw,shape=circle,fill=black,scale=0.5][label=right: {$0$}] at (1,-1.2) {};
\draw(7.4,-1.85)--(7.6,-2)--(7.4,-2.15);
\node at (4,-2) {$\cdots$};
\node at (-1,-2) {$\widetilde{B_n}$};

\draw(1,-4)--(3,-4);
\draw(5,-4)--(7,-4);
\draw(0,-4.04)--(1,-4.04);
\draw(0,-3.96)--(1,-3.96);
\draw(7,-4.04)--(8,-4.04);
\draw(7,-3.96)--(8,-3.96);
\node at (4,-4) {$\cdots$};
\node[draw,shape=circle,fill=black,scale=0.5][label=below: {$0$}] at (0,-4) {};
\node[draw,shape=circle,fill=white,scale=0.5][label=below: {$1$}] at (1,-4) {};
\node[draw,shape=circle,fill=black,scale=0.5][label=below: {$2$}] at (2,-4) {};
\node[draw,shape=circle,fill=black,scale=0.5][label=below: {$3$}] at (3,-4) {};
\node[draw,shape=circle,fill=black,scale=0.5][label=below: {}] at (5,-4) {};
\node[draw,shape=circle,fill=black,scale=0.5][label=below: {}] at (6,-4) {};
\node[draw,shape=circle,fill=black,scale=0.5][label=below: {$n{-}1$}] at (7,-4) {};
\node[draw,shape=rectangle,fill=white,scale=1.2] at (8,-4) {};
\node[draw,shape=circle,fill=white,scale=0.5][label=below: {$n$}] at (8,-4) {};
\draw(7.6,-3.85)--(7.4,-4)--(7.6,-4.15);
\draw(0.4,-3.85)--(0.6,-4)--(0.4,-4.15);
\node at (-1,-4) {$\widetilde{C_n}$};

\draw(0,-6)--(3,-6);
\draw(1,-5.2)--(1,-6);
\draw(5,-6)--(8,-6);
\draw(7,-5.2)--(7,-6);
\node at (4,-6) {$\cdots$};
\node[draw,shape=circle,fill=black,scale=0.5][label=right: {$0$}] at (1,-5.2) {};
\node[draw,shape=circle,fill=white,scale=0.5][label=below: {$1$}] at (0,-6) {};
\node[draw,shape=circle,fill=black,scale=0.5][label=below: {$2$}] at (1,-6) {};
\node[draw,shape=circle,fill=black,scale=0.5][label=below: {$3$}] at (2,-6) {};
\node[draw,shape=circle,fill=black,scale=0.5][label=below: {}] at (3,-6) {};
\node[draw,shape=circle,fill=black,scale=0.5][label=below: {}] at (5,-6) {};
\node[draw,shape=circle,fill=black,scale=0.5][label=below: {}] at (6,-6) {};
\node[draw,shape=rectangle,fill=white,scale=1.2] at (7,-6) {};
\node[draw,shape=circle,fill=black,scale=0.5][label=below: {$n{-}2$}] at (7,-6) {};
\node[draw,shape=circle,fill=white,scale=0.5][label=above: {$n{-}1$}] at (8,-6) {};
\node[draw,shape=circle,fill=white,scale=0.5][label=left: {$n$}] at (7,-5.2) {};
\node at (-1,-6) {$\widetilde{D_n}$};

\draw(11,0)--(15,0);
\draw(13,0)--(13,1.6);
\node[draw,shape=circle,fill=white,scale=0.5][label=below: {$1$}] at (11,0) {};
\node[draw,shape=circle,fill=black,scale=0.5][label=below: {$2$}] at (12,0) {};
\node[draw,shape=rectangle,fill=white,scale=1.2] at (13,0) {};
\node[draw,shape=circle,fill=black,scale=0.5][label=below: {$3$}] at (13,0) {};
\node[draw,shape=circle,fill=black,scale=0.5][label=below: {$4$}] at (14,0) {};
\node[draw,shape=circle,fill=white,scale=0.5][label=below: {$5$}] at (15,0) {};
\node[draw,shape=circle,fill=black,scale=0.5][label=right: {$6$}] at (13,0.8) {};
\node[draw,shape=circle,fill=black,scale=0.5][label=right: {$0$}] at (13,1.6) {};
\node at (10,0) {$\widetilde{E_6}$};

\draw(11,-2)--(17,-2);
\draw(14,-2)--(14,-1.2);
\node[draw,shape=circle,fill=black,scale=0.5][label=below: {$0$}] at (11,-2) {};
\node[draw,shape=circle,fill=black,scale=0.5][label=below: {$1$}] at (12,-2) {};
\node[draw,shape=circle,fill=black,scale=0.5][label=below: {$2$}] at (13,-2) {};
\node[draw,shape=rectangle,fill=white,scale=1.2] at (14,-2) {};
\node[draw,shape=circle,fill=black,scale=0.5][label=below: {$3$}] at (14,-2) {};
\node[draw,shape=circle,fill=black,scale=0.5][label=below: {$4$}] at (15,-2) {};
\node[draw,shape=circle,fill=black,scale=0.5][label=below: {$5$}] at (16,-2) {};
\node[draw,shape=circle,fill=white,scale=0.5][label=below: {$6$}] at (17,-2) {};
\node[draw,shape=circle,fill=black,scale=0.5][label=right: {$7$}] at (14,-1.2) {};
\node at (10,-2) {$\widetilde{E_7}$};

\draw(11,-4)--(18,-4);
\draw(16,-4)--(16,-3.2);
\node[draw,shape=circle,fill=black,scale=0.5][label=below: {$0$}] at (11,-4) {};
\node[draw,shape=circle,fill=black,scale=0.5][label=below: {$1$}] at (12,-4) {};
\node[draw,shape=circle,fill=black,scale=0.5][label=below: {$2$}] at (13,-4) {};
\node[draw,shape=circle,fill=black,scale=0.5][label=below: {$3$}] at (14,-4) {};
\node[draw,shape=circle,fill=black,scale=0.5][label=below: {$4$}] at (15,-4) {};
\node[draw,shape=rectangle,fill=white,scale=1.2] at (16,-4) {};
\node[draw,shape=circle,fill=black,scale=0.5][label=below: {$5$}] at (16,-4) {};
\node[draw,shape=circle,fill=black,scale=0.5][label=below: {$6$}] at (17,-4) {};
\node[draw,shape=circle,fill=black,scale=0.5][label=below: {$7$}] at (18,-4) {};
\node[draw,shape=circle,fill=black,scale=0.5][label=right: {$8$}] at (16,-3.2) {};
\node at (10,-4) {$\widetilde{E_8}$};

\draw(11,-6)--(13,-6);
\draw(14,-6)--(15,-6);
\draw(13,-6.04)--(14,-6.04);
\draw(13,-5.96)--(14,-5.96);
\node[draw,shape=circle,fill=black,scale=0.5][label=below: {$0$}] at (11,-6) {};
\node[draw,shape=circle,fill=black,scale=0.5][label=below: {$1$}] at (12,-6) {};
\node[draw,shape=rectangle,fill=white,scale=1.2] at (13,-6) {};
\node[draw,shape=circle,fill=black,scale=0.5][label=below: {$2$}] at (13,-6) {};
\node[draw,shape=circle,fill=black,scale=0.5][label=below: {$3$}] at (14,-6) {};
\node[draw,shape=circle,fill=black,scale=0.5][label=below: {$4$}] at (15,-6) {};
\draw(13.4,-5.85)--(13.6,-6)--(13.4,-6.15);
\node at (10,-6) {$\widetilde{F_4}$};

\draw(11,-8)--(13,-8);
\draw(12,-8.05)--(13,-8.05);
\draw(12,-7.95)--(13,-7.95);
\node[draw,shape=circle,fill=black,scale=0.5][label=below: {$0$}] at (11,-8) {};
\node[draw,shape=rectangle,fill=white,scale=1.2] at (12,-8) {};
\node[draw,shape=circle,fill=black,scale=0.5][label=below: {$1$}] at (12,-8) {};
\node[draw,shape=circle,fill=black,scale=0.5][label=below: {$2$}] at (13,-8) {};
\draw(12.4,-7.85)--(12.6,-8)--(12.4,-8.15);
\node at (10,-8) {$\widetilde{G_2}$};
\end{tikzpicture}
\caption{The Dynkin diagrams for the affine Weyl groups.  In each case the affine node is labelled $0$, the Coxeter nodes are boxed, and the fully commutative nodes are unfilled (see Theorems~\ref{thm:coxeter-classification} and \ref{thm:fully-commutative-is-minuscule}).}
\label{fig:main-fig}
\end{figure}

\begin{table}[ht!]
\centering
\begin{tabular}{|c|c|c|}
\hline 
Type & Coweight & Reduced Word   \\\hline
$\widetilde{B_n}$ & $\omega_k^{\vee}$, $1{\leq} k{\leq} n{-}1$ & $w(s_0s_1s_2\cdots s_{n-1}s_ns_{n-1}\cdots s_{k+1})^\infty$  \\
& $\omega_{n}^{\vee}$ & $w(s_0s_2s_3\cdots s_ns_1s_2\cdots s_n)^\infty$  \\\hline
$\widetilde{C_n}$ & $\omega_k^{\vee}$, $1{\leq} k{\leq} n$ & $w(s_0s_1s_2\cdots s_ns_{n-1}\cdots s_k)^\infty$ \\\hline
$\widetilde{D_n}$ & $\omega_k^{\vee}$, $1{\leq} k{\leq} n{-}2$ & $w(s_0s_1\cdots s_{n}s_{n-2}s_{n-3}\cdots s_{k+1})^\infty$ \\
& $\omega_{n-1}^{\vee}$ & $w(s_0s_2s_3\cdots s_{n-2}s_ns_1s_2\cdots s_{n-2}s_{n-1})^\infty$  \\
& $\omega_{n}^{\vee}$ & $w(s_0s_2s_3\cdots s_{n-1}s_1s_2\cdots s_{n-2}s_n)^\infty$ 
\\\hline
$\widetilde{G_2}$ & $\omega_1^{\vee}$& $w(s_0s_1s_2)^\infty$ \\
& $\omega_{2}^{\vee}$ & $w(s_0s_1s_2s_1s_2)^\infty$   
\\\hline
$\widetilde{F_4}$ & $\omega_1^{\vee}$ & $w(s_0s_1s_2s_3s_4s_2s_3s_2)^\infty$  \\
& $\omega_{2}^{\vee}$ & $w(s_0s_1s_2s_3s_4)^\infty$  \\
& $\omega_{3}^{\vee}$ & $w(s_0s_1s_2s_3s_4s_2s_3)^\infty$  \\
& $\omega_{4}^{\vee}$ & $w(s_0s_1s_2s_3s_4s_1s_2s_3)^\infty$ 
\\\hline
$\widetilde{E_6}$ & $\omega_1^{\vee}$ & $w(s_0s_6s_3s_4s_5s_2s_3s_4s_6s_3s_2s_1)^\infty$  \\
& $\omega_{2}^{\vee}$ &  $w(s_0s_6s_3s_4s_5s_2s_3s_4s_1)^\infty$  \\
& $\omega_{3}^{\vee}$ & $w(s_0s_1s_2s_3s_4s_5s_6)^\infty$ \\
& $\omega_{4}^{\vee}$ & $w(s_0s_6s_3s_2s_1s_4s_3s_2s_5)^\infty$  \\
& $\omega_{5}^{\vee}$ & $w(s_0s_6s_3s_2s_1s_4s_3s_2s_6s_3s_4s_5)^\infty$ \\
& $\omega_{6}^{\vee}$ & $w(s_0s_6s_3s_4s_5s_2s_3s_4s_1s_2s_3)^\infty$ 
\\\hline
$\widetilde{E_7}$ & $\omega_1^{\vee}$ & $w(s_0s_3s_1s_4s_2s_7s_3s_4s_2s_7s_5s_1s_3s_2s_6s_4s_5)^\infty$  \\
& $\omega_{2}^{\vee}$ &  $w(s_0s_2s_4s_3s_2s_6s_4s_7s_1s_5s_3)^\infty$   \\
& $\omega_{3}^{\vee}$ & $w(s_0s_1s_2s_3s_4s_5s_6s_7)^\infty$ \\
& $\omega_{4}^{\vee}$ & $w(s_0s_7s_3s_5s_2s_3s_4s_6s_1s_3)^\infty$  \\
& $\omega_{5}^{\vee}$ & $w(s_0s_2s_5s_3s_4s_7s_3s_2s_1s_6s_7s_3s_4)^\infty$  \\
& $\omega_{6}^{\vee}$ & $w(s_0s_2s_1s_7s_3s_4s_2s_5s_6s_3s_4s_5s_7s_3s_4s_2s_3s_1)^\infty$  \\
& $\omega_{7}^{\vee}$ & $w(s_0s_2s_6s_7s_1s_3s_2s_3s_5s_7s_4s_5s_3s_4)^\infty$ 
\\\hline
$\widetilde{E_8}$ & $\omega_1^{\vee}$ & $w(s_0s_8s_5s_4s_6s_1s_5s_2s_3s_7s_4s_8s_6s_5s_4s_8s_3s_6s_7s_5s_2s_1s_4s_6s_3s_5s_2s_4s_3)^\infty$  \\
& $\omega_{2}^{\vee}$ &  $w(s_0s_4s_3s_1s_5s_7s_6s_4s_2s_5s_3s_6s_1s_4s_8s_2s_5s_8s_6)^\infty$  \\
& $\omega_{3}^{\vee}$ & $w(s_0s_8s_5s_4s_1s_3s_4s_7s_6s_7s_5s_2s_4s_6)^\infty$  \\
& $\omega_{4}^{\vee}$ & $w(s_0s_5s_3s_2s_1s_4s_7s_8s_5s_4s_6)^\infty$  \\
& $\omega_{5}^{\vee}$ & $w(s_0s_1s_2s_3s_4s_5s_6s_7s_8)^\infty$  \\
& $\omega_{6}^{\vee}$ & $w(s_0s_2s_1s_6s_3s_7s_4s_3s_5s_8s_6s_4s_5)^\infty$ \\
& $\omega_{7}^{\vee}$ & $w(s_0s_4s_5s_8s_6s_3s_4s_5s_4s_2s_3s_1s_2s_7s_4s_6s_3s_8s_5s_6s_8s_7s_1)^\infty$ \\
& $\omega_{8}^{\vee}$ & $w(s_0s_5s_3s_1s_8s_2s_6s_5s_4s_6s_3s_7s_6s_5s_6s_2s_4)^\infty$ \\
\hline
\end{tabular}
\caption{A list of minimal elements of $\widetilde{\mathcal{W}}$, associated to fundamental coweights as in Proposition~\ref{prop:minimal}.  In each case $w$ ranges over $W^{J_k}$ for the words corresponding to $\omega_k^{\vee}$.}
\label{tab:all-minimal}
\end{table}

\begin{remark}
Reduced expressions, in a different form from ours, for the minimal elements of $\tilde{\mc{W}}$ have recently been computed independently by Wang \cite{Wang} while this paper was in preparation. Wang also shows that infinite Coxeter elements are minimal in type $\tilde{C_n}$.
\end{remark}

\section{Background}
\label{sec:background}

\subsection{Coxeter groups}
\label{sec:Coxeter-background}
We refer the reader to Bj\"{o}rner--Brenti \cite{Bjorner-Brenti} for basics on Coxeter groups.  Let $W$ be a Coxeter group with simple reflections $S=\{s_1,\cdots, s_n\}$. Any element $c \in W$ which is the product of the $n$ simple reflections in some order is called a \emph{Coxeter element} (some other sources include conjugates of these elements in the definition of Coxeter elements, we do not). 

Given $w \in W$, an expression
\[
w=s_{i_1}\cdots s_{i_{\ell}}
\]
of minimal length is called a \emph{reduced word} for $w$, and in this case $\ell=\ell(w)$ is called the \emph{length} of $w$. The (right) weak order $\leq_R$ on $W$ is the partial order with cover relations $w \lessdot_R ws_i$ whenever $\ell(ws_i)=\ell(w)+1$. 

A well known theorem of Tits \cite{Tits-words} states that all reduced words for $w$ are connected via the defining relations $s_{i}s_js_i\cdots = s_js_is_j \cdots$ with $m_{ij} \in \{2,3,\ldots\}$ factors on each side (called a \emph{commutation move} if $m_{ij}=2$ and a \emph{braid move} if $m_{ij}\geq 3$).  If no reduced word for $w$ admits the application of a braid move then $w$ is called \emph{fully commutative} \cite{stembridge-fullycommutative}.  We say an infinite reduced word $\mathbf{i}=i_1i_2\cdots$ is fully commutative if all elements $w=s_{i_1}\cdots s_{i_k}$ are fully commutative for $k=1,2,\ldots$.

For $J \subseteq S$, the \emph{parabolic subgroup} $W_J$ is the subgroup of $W$ generated by $J$, viewed as a Coxeter group with simple reflections $J$.  Each left coset $wW_J$ of $W_J$ in $W$ contains a unique element $w^J$ of minimal length, and the set $\{w^J \: | \: w \in W\}$ of these minimal coset representatives is called the \emph{parabolic quotient} $W^J$.  Letting $w_J \in W_J$ be the unique element such that $w^Jw_J=w$, we have $\ell(w^J)+\ell(w_J)=\ell(w)$.  If $W^J$ is finite it contains a unique element $w_0^J$ of maximum length.

\subsection{Affine Weyl groups}
We refer the reader to Bourbaki \cite{Bourbaki} for more details on affine Weyl groups.  For the remainder of the paper, we let $\tilde{W}$ denote an affine Weyl group with associated finite Weyl group $W$.  We number the simple reflections so that $W$ has simple reflections $S=\{s_1,\ldots,s_n\}$ while $\tilde{W}$ has $\tilde{S}=\{s_0\} \sqcup S$.

We let $\Phi$ denote the finite root system associated to $W$, $\Phi^+$ denote a choice of positive roots, and $\Delta=\{\alpha_1,\ldots,\alpha_n\}$ denote the corresponding set of simple roots.  We write $\xi$ for the highest root of $\Phi^+$ and make the notational convention that $\alpha_0=-\xi$; we write $\widetilde{\Delta}=\{\alpha_0,\ldots,\alpha_n\}$.

The standard Euclidean space containing $\Phi$ is denoted $V$ and we write $\langle, \rangle$ for the inner product. The group $\tilde{W}$ acts faithfully on $V$ by affine linear transformations, and the action of $W \subset \tilde{W}$ is linear and preserves the inner product.  

The \emph{fundamental coweights} $\omega_1^{\vee},\ldots,\omega_n^{\vee}$ are determined by the formula $\langle \alpha_i, \omega_j^{\vee} \rangle=\delta_{ij}$.  For $i=1,\ldots,n$ the \emph{simple coroot} $\alpha_i^{\vee}$ is defined by $\alpha_i^{\vee}=\frac{2}{\langle \alpha_i, \alpha_i \rangle} \alpha_i$. The \emph{coroot lattice} is $Q^{\vee}=\bigoplus_{i=1}^n \mathbb{Z} \alpha_i^{\vee}$.  For $i=1,\ldots,n$ we let $k_i$ denote the smallest positive integer (necessarily finite) such that $k_i \omega_i^{\vee} \in Q^{\vee}$. For each $\lambda \in Q^{\vee}$ there is a unique element $t_{\lambda}$ in $\tilde{W}$ which acts on $V$ via translation by $\lambda$.  This realizes $\tilde{W}$ as the semidirect product $W \ltimes Q^{\vee}$ where $wt_{\lambda}w^{-1}=t_{w\lambda}$ for $w \in W$.

For $w \in \tilde{W}$ the \emph{inversion set} is defined to be
\[
\Inv(w)=\{\alpha_{i_1}, s_{i_1}\alpha_{i_2}, s_{i_1}s_{i_2}\alpha_{i_3},\cdots,s_{i_1}\cdots s_{i_{k-1}} \alpha_{i_k}\}
\]
where $w=s_{i_1}\cdots s_{i_k}$ is any reduced word for $w$ (it is an important fact that the inversion set does not depend on the reduced word chosen).  It is clear from the definition that if $w \leq_R w'$ then $\Inv(w) \subseteq \Inv(w')$; in fact, the converse holds as well: weak order is equivalent to containment of inversion sets. If $\mathbf{i}=i_1 i_2 \cdots$ is an infinite reduced word, then the prefixes $w^{(k)}=s_{i_1}\cdots s_{i_k} \in \tilde{W}$ clearly satisfy $w^{(k)} \leq_R w^{(k')}$ whenever $k \leq k'$.  The inversion set of $\mathbf{i}$ is defined to be the increasing union 
\[
\Inv(\mathbf{i})=\bigcup_{k=1}^{\infty} \Inv(w^{(k)}).
\]
The limit weak order $\tilde{\mc{W}}$ on the limit elements $[\mathbf{i}]$ is determined by containment of these inversion sets.

Associated to $\tilde{W}$ is an affine hyperplane arrangement $\mc{H}$ in $V$, with hyperplanes $H_{\alpha,k}=\{x \in V \: | \: \langle x, \alpha \rangle =k\}$ for $\alpha \in \Phi^+$ and $k \in \mathbb{Z}$.  The \emph{reflections} in $\tilde{W}$ are defined to be the conjugates of the simple reflections, and these elements act on $V$ via reflection about one of the hyperplanes $H_{\alpha,k}$. The connected components of the complement of $\mc{H}$ are called \emph{alcoves}, and $\tilde{W}$ acts simply transitively on the set of alcoves.  Fixing the \emph{fundamental alcove} $\mc{A}_{\mathrm{id}}$ to be that bounded by $H_{\alpha_i,0}$ for $i=1,\ldots,n$ and $H_{\alpha_0,-1}$, this action determines a canonical labelling of the alcoves $\mc{A}_w$ by elements $w \in \tilde{W}$.  

The inversions of $w$ are in natural bijection with the hyperplanes from $\mc{H}$ separating $\mc{A}_w$ from $\mc{A}_{\mathrm{id}}$: for each inversion $s_{i_1}\cdots s_{i_{a-1}}\alpha_{i_a}$ there is a reflection
\[
s_{i_1}\cdots s_{i_{a-1}}s_{i_a} s_{i_{a-1}} \cdots s_{i_1}
\]
which acts via reflection across some hyperplane $H_{\alpha,k}$, and this is the corresponding separating hyperplane. When it is convenient to argue in terms of hyperplanes, we will call this a \emph{hyperplane inversion}; since this correspondence is a bijection, weak order and limit weak order are also characterized by containment of hyperplane inversion sets.

The Dynkin diagram of $(\widetilde{W},\widetilde{S})$ is a directed graph with nodes $\widetilde{\Delta}$ such that there are $-2\langle\alpha_i,\alpha_j\rangle/\langle\alpha_j,\alpha_j\rangle$ directed edges from $\alpha_i$ to $\alpha_j$, for $i\neq j$. Finite and affine Weyl groups are completely classified by their Dynkin diagrams.  They consist of four infinite families $\tilde{A_n} (n\geq 1), \tilde{B_n} (n \geq 2), \tilde{C_n} (n \geq 2), \tilde{D_n} (n \geq 4)$ and the exceptional types $\tilde{E_6}, \tilde{E_7}, \tilde{E_8}, \tilde{F_4},$ and $\tilde{G_2}$. See the corresponding Dynkin diagrams in Figure~\ref{fig:main-fig}.

\section{Words for infinite translation elements}
\label{sec:translation}

\subsection{Translations by fundamental coweights}
The following proposition, implicit in \cite{lam-pylyavskyy} and \cite{lam-thomas}, describes the minimal elements of $\tilde{\mc{W}}$ geometrically: they are the infinite translations in the directions of the rays of the corresponding reflection arrangement.

\begin{prop}\label{prop:minimal}
The minimal elements of $\widetilde{\mathcal{W}}$ are precisely $$\{[t_{wk_i\omega_i^{\vee}}^{\infty}]\:|\: 1\leq i\leq n, w\in W^{J_i}\}$$
where $J_i \coloneqq \{s_j \in S \:|\: j\neq i\}$.
\end{prop}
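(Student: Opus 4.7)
The plan is to reduce the problem to the geometry of the Weyl fan via the hyperplane-inversion correspondence from Section~\ref{sec:background}. First, for $\lambda \in Q^{\vee}$, I would compute the hyperplane-form inversion set of $[t_{\lambda}^{\infty}]$. Since $t_{\lambda}^k$ sends $\mc{A}_{\mathrm{id}}$ to $\mc{A}_{\mathrm{id}}+k\lambda$, and $H_{\alpha,m}$ separates two alcoves iff the affine function $\langle\cdot,\alpha\rangle-m$ changes sign between them, a direct computation shows that, up to a finite set depending on the chosen reduced expressions,
\begin{align*}
\Inv([t_{\lambda}^{\infty}]) \;=\; \{H_{\alpha,m}\,:\,\alpha\in\Phi^+,\,\langle\lambda,\alpha\rangle>0,\,m\geq 1\}\;\cup\;\{H_{\alpha,m}\,:\,\alpha\in\Phi^+,\,\langle\lambda,\alpha\rangle<0,\,m\leq 0\}.
\end{align*}
Thus this inversion set depends, up to finite discrepancy, only on the sign vector $\sigma_\lambda \coloneqq (\mathrm{sgn}\langle\lambda,\alpha\rangle)_{\alpha\in\Phi^+}$.

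Second, containment $[t_\mu^{\infty}]\leq [t_\lambda^{\infty}]$ in $\widetilde{\mc{W}}$ translates, by the previous paragraph, into the sign-support condition $\mathrm{sgn}\langle\mu,\alpha\rangle\in\{0,\mathrm{sgn}\langle\lambda,\alpha\rangle\}$ for every $\alpha\in\Phi^+$; equivalently, $\mu$ lies in a face of the Weyl fan whose closure contains $\lambda$. The minimal nonzero sign vectors among these are thus those realized on the one-dimensional rays of the Weyl fan, namely $\mathbb{R}_{>0}\cdot w\omega_i^{\vee}$ for $1\leq i\leq n$ and $w\in W$. Since $W_{J_i}$ is precisely the stabilizer of $\omega_i^{\vee}$ in $W$, these rays are distinct exactly as $w$ runs over $W/W_{J_i}\leftrightarrow W^{J_i}$, and $wk_i\omega_i^{\vee}$ is the smallest positive element of $Q^{\vee}$ on the ray through $w\omega_i^{\vee}$ by definition of $k_i$; this recovers the list in the statement.

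Third, I would show that every minimal element of $\widetilde{\mc{W}}$ arises this way. To an arbitrary infinite reduced word $\mathbf{i}$ assign a sign function $\sigma_{\mathbf{i}}\colon\Phi^+\to\{-,0,+\}$ recording, for each $\alpha$, whether $\Inv(\mathbf{i})$ contains cofinitely many $H_{\alpha,m}$ with $m\geq 1$, cofinitely many with $m\leq 0$, or only finitely many altogether; biclosedness of $\Inv(\mathbf{i})$ forces these three cases to be exhaustive. Minimality of $[\mathbf{i}]$ then forces $\sigma_{\mathbf{i}}$ to be realized on a ray of the Weyl fan, and matching the full inversion sets (not just the sign functions) yields $[\mathbf{i}]=[t_{wk_i\omega_i^{\vee}}^{\infty}]$ for the corresponding $i$ and $w\in W^{J_i}$.

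\textbf{Main obstacle.} The substantive step is the last one: promoting an equality of sign functions to an equality of limit elements, despite the finite ambiguity recorded in Step~1. I expect the cleanest path is to invoke the Tits-boundary description of $\widetilde{\mc{W}}$ from \cite{lam-thomas}, under which minimal limit elements correspond to vertices of the stratification of the boundary at infinity---which match exactly the rays of the Weyl fan identified above.
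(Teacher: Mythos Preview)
The paper does not give its own proof of this proposition; it simply records the statement as ``implicit in \cite{lam-pylyavskyy} and \cite{lam-thomas}.'' Your proposal is therefore not competing with an argument in the paper but rather reconstructing the geometric content of those references, and in that respect your outline is essentially correct and matches what one finds there: limit elements correspond to faces of the Weyl fan, and minimal ones to rays.

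Two small comments. First, your caveat ``up to a finite set'' in Step~1 is unnecessary: since every point $x\in\mc{A}_{\mathrm{id}}$ satisfies $0<\langle x,\alpha\rangle<1$ for each $\alpha\in\Phi^+$, the hyperplane inversion set of $[t_\lambda^\infty]$ is \emph{exactly} the set you wrote down, with no finite error term. This slightly sharpens Step~2 as well.

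Second, the genuine content sits in Step~3, and your sketch there is a bit thin. The claim ``minimality of $[\mathbf{i}]$ forces $\sigma_{\mathbf{i}}$ to be realized on a ray'' presupposes that every limit element dominates some translation element; equivalently, that the sign function $\sigma_{\mathbf{i}}$ of an arbitrary infinite reduced word is the sign vector of a (nonzero) face of the Weyl fan. Biclosedness of $\Inv(\mathbf{i})$ does yield this, but the verification is not automatic---one must check that the ``coarse'' sign function inherits the convexity/coconvexity conditions and hence cuts out a face. You are right that the cleanest route is simply to invoke \cite{lam-thomas}, which is exactly what the paper does.
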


In Section~\ref{sec:explicit-words} we give a method for constructing infinite reduced words for these and other infinite translation elements.  Understanding these reduced words is necessary for resolving Problems~\ref{prob:describe-words} and \ref{prob:are-minimal-fully-commutative} and understanding the limit Coxeter elements, for the characterization of minimal elements in Proposition~\ref{prop:minimal} is not immediately applicable to any of these problems.

\subsection{Explicit reduced words}
\label{sec:explicit-words}
In this section, we explain how to write down explicit infinite reduced words that correspond to open faces of the reflection arrangement of $W$. The content of this section generalizes that of Section 4.7 of \cite{lam-pylyavskyy}, which is specific to type $A$. Our formulation and arguments are type-uniform and the proof ideas will be different from those in \cite{lam-pylyavskyy}.

Recall that the set of simple roots for $W$ is $\Delta=\{\alpha_1,\ldots,\alpha_n\}$ while the set of simple roots for $\widetilde{W}$ is $\widetilde{\Delta}=\{\alpha_0,\ldots,\alpha_n\}$, where $\alpha_0=-\xi$ with $\xi$ the highest root of $W$. 

Let $\lambda\neq0\in Q^{\vee}$. We now explicitly write down an infinite reduced word $\mathbf{i}=s_{i_1}s_{i_2}\cdots$ such that $[\mathbf{i}]=[t_{\lambda}^\infty]$. The construction is inductive. Let $\lambda^{(0)}=\lambda$. For $j\geq1$, we choose $i_j\in\{0,1,\ldots,n\}$ such that $\langle \lambda^{(j-1)},\alpha_{i_j}\rangle <0$ and then let $\lambda^{(j)}=s_{i_j}\lambda^{(j-1)}.$

Notice that if $\langle\lambda,\alpha_k\rangle\geq0$ for all $k=0,\ldots,n$, then we must have $\langle\lambda,\alpha_k\rangle=0$ for all $k=0,\ldots,n$ since $-\alpha_0=\xi$ is a positive linear combination of $\alpha_1,\ldots,\alpha_n$. And since $\alpha_1,\ldots,\alpha_n$ span $V$, the equalities imply $\lambda=0$. Therefore, as long as $\lambda\neq0$, none of its Weyl group translates will be 0 so the above procedure will continue indefinitely. 

\begin{prop}\label{prop:explicitword}
Let $\lambda\neq0\in Q^{\vee}$ and construct the infinite word $\mathbf{i}$ as above. Then $\mathbf{i}$ is reduced and $[\mathbf{i}]=[t_{\lambda}^\infty]$.
\end{prop}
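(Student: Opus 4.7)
The plan is to interpret the construction as an alcove walk along a ray in direction $\lambda$. Decomposing $w^{(j)} = t_{\mu_j} u_j$ in $\widetilde{W} = W \ltimes Q^\vee$, an easy induction gives $\lambda^{(j)} = u_j^{-1}\lambda$. Since the outward normal to the wall of $\mathcal{A}_{\mathrm{id}}$ labelled $s_i$ is a positive multiple of $-\alpha_i$ (with $\alpha_0 = -\xi$, and the wall being $H_{\xi,1}$ when $i = 0$), the outward normal at the wall labelled $s_i$ of $\mathcal{A}_{w^{(j-1)}}$ is $-u_{j-1}\alpha_i$, and the selection rule $\langle\lambda^{(j-1)}, \alpha_{i_j}\rangle < 0$ translates to $\langle\lambda, -u_{j-1}\alpha_{i_j}\rangle > 0$: the direction $\lambda$ has positive outward component at the wall of $\mathcal{A}_{w^{(j-1)}}$ labelled $s_{i_j}$. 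Thus the construction traces the sequence of alcoves crossed by some ray $R(t) = p_0 + t\lambda$ starting from a (possibly choice-dependent) basepoint $p_0 \in \mathcal{A}_{\mathrm{id}}$.

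Reducedness is immediate from this interpretation: the line $R$ meets each affine hyperplane $H_{\beta,k}$ in at most one point unless $\langle\lambda,\beta\rangle = 0$, in which case the selection rule prevents the algorithm from ever crossing $H_{\beta,k}$. So each step of the walk contributes a new hyperplane to $\Inv(w^{(j)})$, giving $\ell(w^{(j)}) = j$. For the inversion set, the hyperplanes $H_{\beta,k}$ with $\beta \in \Phi^+$ ever crossed by the ray are precisely those separating $\mathcal{A}_{\mathrm{id}}$ from $\lambda$-infinity, namely those with $\langle\lambda,\beta\rangle > 0$ and $k \geq 1$, or $\langle\lambda,\beta\rangle < 0$ and $k \leq 0$. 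A direct computation using $\ell(t_{N\lambda}) = \sum_{\beta \in \Phi^+}|\langle N\lambda,\beta\rangle|$ together with the standard inversion description of a pure translation shows this set equals $\bigcup_N \Inv(t_{N\lambda}) = \Inv(t_\lambda^\infty)$, yielding $[\mathbf{i}] = [t_\lambda^\infty]$.

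The main obstacle is treating non-generic $\lambda$ uniformly: when $R$ passes through a codimension-$\geq 2$ face the ray exits multiple alcoves ``simultaneously,'' so the literal correspondence with a specific ray-walk breaks down and the algorithm's freedom to pick among several admissible $i_j$ becomes visible. I would resolve this either by perturbing $\lambda$ to a nearby generic direction and taking limits (the inversion set is preserved since perturbations only permute finitely many crossings near each corner), or by replacing the geometric step with a direct joint induction showing that $w^{(j)}$ is reduced and that $\mathcal{A}_{\mathrm{id}}$ lies on the side of each crossed hyperplane opposite to $\lambda$-infinity: if the hyperplane $H^{(j)}$ crossed at step $j$ lay in $\Inv(w^{(j-1)})$, it would separate $\mathcal{A}_{\mathrm{id}}$ from $\mathcal{A}_{w^{(j-1)}}$ and force the latter onto the $\lambda$-side of $H^{(j)}$, contradicting the selection rule. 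This purely combinatorial version handles all valid choices of $i_j$ at once.
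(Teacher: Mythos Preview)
Your approach is essentially the paper's: interpret the construction as an alcove walk in direction $\lambda$, observe that the selection rule means each step crosses a wall from its $-\lambda$-side to its $+\lambda$-side, and conclude that no hyperplane is crossed twice and that the crossed hyperplanes are exactly those separating $\mathcal{A}_{\mathrm{id}}$ from $\lambda$-infinity, i.e.\ those of $t_\lambda^\infty$.

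The one difference is that the paper never commits to a single ray. It argues directly that for each admissible $i_j$ there exists \emph{some} point in the current alcove $\mathcal{A}_{w^{(j-1)}}$ from which a $\lambda$-translate hits the $s_{i_j}$-wall first, so the walk is a chain of alcoves each reachable from the previous by such a translate; since every crossing is in the $+\lambda$ direction, no hyperplane can be recrossed. This ``moving basepoint'' device is exactly your combinatorial resolution in disguise and handles all algorithmic choices at once from the start. One small correction: the obstruction you flag is not really about non-generic $\lambda$ but about the algorithm's freedom to choose among several admissible walls at each step, which is present even when $\lambda$ is generic (different choices need not be realized by any single ray from a fixed $p_0$). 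Your perturbation alternative is therefore unnecessary, and the combinatorial route you sketch is the right one---it is what the paper does.
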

\begin{proof}
Recall that the fundamental alcove of the affine hyperplane arrangement of $\widetilde{W}$ is given by
$$\mc{A}_{\mathrm{id}}=\{x\in V\:|\:\langle x,\alpha_i\rangle>0\text{ for }i=1,\ldots,n\text{ and }\langle x,\alpha_0\rangle>-1\}.$$
For any $\alpha_i$ such that $\langle \lambda,\alpha_i\rangle<0$, we can choose $\mu\in\mc{A}_{\mathrm{id}}$ close to the hyperplane $H_{\alpha_i,k}$ ($k=0$ if $i=1,\ldots,n$ and $k=-1$ if $i=0$ where we identify $H_{\alpha_0,-1}$ with $H_{\xi,1}$) that bounds $\mc{A}_{\mathrm{id}}$ such that moving in direction $\lambda$ from $\mu$ intersects $H_{\alpha_i,k}$ first, among all $n+1$ hyperplanes that bound $\mc{A}_{\mathrm{id}}$. Conversely, if $\langle\lambda,\alpha_i\rangle\geq0$, then from any $\mu\in\mc{A}_{\mathrm{id}}$ and moving in direction $\lambda$, we are only getting further away from $H_{\alpha_i,k}$ and can never encounter this hyperplane.

After such an $\alpha_{i_1}$ is chosen with $\langle\lambda,\alpha_{i_1}\rangle<0$, we move into the alcove $\mc{A}_{s_{i_1}}$, which is the alcove reflected across the hyperplane $H_{\alpha_{i_1},k}$ from the fundamental alcove, and is also the alcove that can be reached from some point in $\mc{A}_{\mathrm{id}}$ by moving in direction $\lambda$. Reflecting by $s_{i_1}$ and choosing $\alpha_{i_2}$ such that $\langle s_{i_1}\lambda, \alpha_{i_2}\rangle<0$ is exactly the same as choosing an alcove $\mc{A}_{s_{i_2}s_{i_1}}$ which can be reached from some point $\mu\in \mc{A}_{s_{i_1}}$ by translating in direction $\lambda$.

Continue this procedure described above, the alcove path described by $\mathbf{i}$ is a sequence of alcoves starting at $\mc{A}_{\mathrm{id}}$ such that the next one can be obtained by moving from some point inside the previous alcove in direction $\lambda$. As a result, we see that no hyperplanes can be crossed twice by this alcove path $\mathbf{i}$ and that the hyperplanes crossed are exactly those crossed by $t_{\lambda}^\infty$. Therefore, $\mathbf{i}$ is reduced and $[\mathbf{i}]=[t_{\lambda}^\infty]$.
\end{proof}

\begin{remark}
When for some $k$ we have $\lambda^{(k)}=\lambda$ in the above procedure, we can conclude immediately that $[t_{\lambda}^\infty]=[(s_{i_1}\cdots s_{i_k})^\infty]$. This is the method used to compute the words appearing in Table~\ref{tab:all-minimal}.
\end{remark}

\section{Limit Coxeter elements}
\label{sec:infinite-coxeter}

\begin{prop}
\label{prop:coxeters-are-conjugate}
Let $\tilde{W}$ be an affine Weyl group other than the affine symmetric group, and let $c, c'$ be any two Coxeter elements for $\tilde{W}$, then $c$ and $c'$ are $W$-conjugate.
\end{prop}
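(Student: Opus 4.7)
The plan is to use the standard correspondence between Coxeter elements of $\tilde{W}$ and acyclic orientations of its Dynkin diagram $\tilde{\Gamma}$, reducing the problem to a combinatorial question about flips of tree orientations. To each Coxeter element $c=s_{i_0}\cdots s_{i_n}$ I associate the acyclic orientation $\mathcal{O}_c$ of $\tilde{\Gamma}$ in which each edge $\{j,k\}$ is directed $j\to k$ iff $s_j$ precedes $s_k$ in the word. Since each simple reflection appears exactly once in a Coxeter element, no braid move applies, and the only moves between reduced expressions are commutations of non-adjacent letters; these preserve $\mathcal{O}_c$, so $c\mapsto\mathcal{O}_c$ is a well-defined bijection between Coxeter elements and acyclic orientations of $\tilde{\Gamma}$. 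Crucially, the cyclic rotation $s_{i_0}s_{i_1}\cdots s_{i_n}\mapsto s_{i_1}\cdots s_{i_n}s_{i_0}$, which is the conjugation $c\mapsto s_{i_0}cs_{i_0}^{-1}$, corresponds to the \emph{source-to-sink flip} at $i_0$ in $\mathcal{O}_c$ (reversing every edge incident to $i_0$).

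Since $\tilde{W}\neq\tilde{A_n}$, the Dynkin diagram $\tilde{\Gamma}$ is a tree; by a standard combinatorial fact, source-to-sink flips act transitively on the acyclic orientations of any tree. This immediately yields that any two Coxeter elements $c,c'$ of $\tilde{W}$ are conjugate in $\tilde{W}$ via a product of simple reflections corresponding to the sequence of flips carrying $\mathcal{O}_c$ to $\mathcal{O}_{c'}$. The main task, and primary obstacle, is to strengthen this to $W$-conjugacy, i.e., to choose the flip sequence so as to avoid the affine node~$0$.

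A key observation, verifiable by inspecting Figure~\ref{fig:main-fig}, is that in every affine type other than $\tilde{A_n}$ the affine node $0$ is a leaf of $\tilde{\Gamma}$. It thus suffices to prove the following key lemma: \emph{on any tree $T$ in which the distinguished vertex $v_0$ is a leaf, source-to-sink flips at $V(T)\setminus\{v_0\}$ act transitively on acyclic orientations of $T$.} I would prove this by induction on $|V(T)|$. Since the flip at $v_0$ toggles exactly the single edge $\{v_0,u\}$ to its unique neighbor $u$, it suffices to realize this toggle by non-$v_0$ flips: first apply flips inside $T\setminus\{v_0\}$ (available by induction on the smaller tree) to make $u$ a source or sink; then flip $u$ itself, which toggles $\{v_0,u\}$ along with the remaining edges at $u$; and finally use further non-$v_0$ flips to restore those extraneous edges to their original orientation. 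Verifying that every intermediate orientation supports the required source/sink condition is the fiddly technical part of the argument; once completed, the key lemma shows that the conjugating element between any two Coxeter elements of $\tilde{W}$ can always be chosen as a product of simple reflections $s_v$ with $v\neq 0$, hence lies in $W$, proving the proposition.
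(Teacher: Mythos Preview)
Your overall framework---the bijection between Coxeter elements and acyclic orientations, with conjugation by a source/sink $s_i$ corresponding to the flip at node $i$---is exactly what the paper uses, and your reduction to showing that non-$0$ flips act transitively is on target.

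Where you diverge is in the proof of that last step. You invoke the (true but extraneous) fact that $0$ is a leaf and set up an induction whose ``fiddly technical part'' you yourself flag as unresolved. The paper bypasses all of this with a one-line observation that does not use the leaf property: reversing all edges at node $0$ has the same net effect on the orientation as reversing at every other node, simply because flipping at \emph{all} nodes reverses each edge twice and hence is the identity. Concretely, if $c=s_0s_{i_1}\cdots s_{i_n}$ and we wish to perform the source-flip at $0$ (i.e., conjugate by $s_0$), we can instead rotate the word the other way, conjugating successively by $s_{i_n},s_{i_{n-1}},\ldots,s_{i_1}$; at each stage the letter being moved is the current rightmost one, hence a sink, so every step is a valid flip, and none of them is $s_0$. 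Thus any occurrence of an $s_0$-conjugation in a flip sequence can be replaced by a sequence of non-$s_0$ conjugations, giving $W$-conjugacy immediately. This is both shorter than your inductive scheme and strictly more general, since it applies to an arbitrary node of an arbitrary tree.
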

\begin{proof}
It is well-known (and easy to verify) that the distinct Coxeter elements $c$ for any Coxeter group correspond naturally to the acyclic orientations $\mc{O}$ of the edges of the Dynkin diagram, with a directed edge from $\alpha_i$ to an adjacent node $\alpha_j$ indicating that $s_i$ precedes $s_j$ in the product defining $c$.  

Let $c,c'$ be two Coxeter elements of $\tilde{W}$ with corresponding orientations $\mc{O}, \mc{O}'$.  Conjugating $c$ by $s_i$ corresponds to reversing the orientation of all edges incident to the node $\alpha_i$ when this node is a source or a sink.  Since the Dynkin diagram is a tree, it is not hard to see that we may move from $\mc{O}$ to $\mc{O}'$ by a sequence of such moves, so $c,c'$ are $\tilde{W}$-conjugate.  To see that they are in fact $W$-conjugate, note that reversing orientations at the single node $\alpha_0$ has the same effect as reversing at every node except $\alpha_0$.  Therefore we can connect $c$ and $c'$ without ever conjugating by $s_0$.
\end{proof}

\begin{cor}
\label{cor:coxeter-node-makes-sense}
If $\tilde{W}$ is an affine Weyl group other than the affine symmetric group and if we have $[c^{\infty}]=[t_{wk_i\omega_i^{\vee}}^{\infty}]$ for some Coxeter element $c$ for $\tilde{W}$ and some $w \in W$, then for every Coxeter element $c'$ we have
\[
[(c')^{\infty}]=[t_{uk_i\omega_i^{\vee}}^{\infty}]
\]
for some $u \in W$.
\end{cor}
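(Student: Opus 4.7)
The plan is to use Proposition~\ref{prop:coxeters-are-conjugate} to reduce this to a statement about how the translation direction transforms under conjugation by $W$. By that proposition we may write $c'=vcv^{-1}$ for some $v \in W$. Using the semidirect product decomposition $\tilde{W}=W \ltimes Q^{\vee}$, write $c=(w_c,\lambda_c)$ with $w_c \in W$. Since $W$ is finite, $w_c$ has some finite order $m$, and a direct computation in the semidirect product gives $c^m = t_{\mu}$ where $\mu = (1 + w_c + \cdots + w_c^{m-1})\lambda_c \in Q^{\vee}$. Applying the identity $vt_{\mu}v^{-1}=t_{v\mu}$ (valid for any $v \in W$) then yields $(c')^m = vc^mv^{-1}=t_{v\mu}$.

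Next, I would observe that $[c^{\infty}]=[(c^m)^{\infty}]=[t_{\mu}^{\infty}]$. Indeed, the prefixes of $(c^m)^{\infty}$ run through $c^m, c^{2m}, c^{3m}, \ldots$ in right weak order, and this sequence is cofinal in the sequence of prefixes $c, c^2, c^3, \ldots$ of $c^{\infty}$, so the increasing unions of their inversion sets coincide. Combined with the hypothesis $[c^{\infty}]=[t_{wk_i\omega_i^{\vee}}^{\infty}]$, this gives $[t_{\mu}^{\infty}]=[t_{wk_i\omega_i^{\vee}}^{\infty}]$.

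Finally, the geometric description of hyperplane inversions shows that $[t_{\nu}^{\infty}]$ depends only on the open ray $\mathbb{R}_{>0}\,\nu$: a hyperplane lies in this inversion set precisely when it eventually separates $\mc{A}_{\mathrm{id}}$ from the translates $t_{k\nu}(\mc{A}_{\mathrm{id}})$ as $k \to \infty$. Thus $\mu$ is a positive real multiple of $wk_i\omega_i^{\vee}$, and since $v$ acts linearly on $V$, the vector $v\mu$ is a positive real multiple of $v(wk_i\omega_i^{\vee})=(vw)k_i\omega_i^{\vee}$. Therefore
\[
[(c')^{\infty}] = [((c')^m)^{\infty}] = [t_{v\mu}^{\infty}] = [t_{(vw)k_i\omega_i^{\vee}}^{\infty}],
\]
and taking $u := vw$ completes the argument. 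The only non-routine step is the ray-only claim for $[t_{\nu}^{\infty}]$, which is implicit in the hyperplane description developed in Section~\ref{sec:background} and in the proof of Proposition~\ref{prop:minimal}; every other manipulation is purely formal in the semidirect-product description of $\tilde{W}$.
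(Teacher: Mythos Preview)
Your argument is correct and follows the same route as the paper's: both invoke Proposition~\ref{prop:coxeters-are-conjugate} to write $c'=vcv^{-1}$ with $v\in W$ and then conjugate a translation power of $c$ using $vt_\lambda v^{-1}=t_{v\lambda}$, arriving at $u=vw$. The paper compresses your middle steps into the single assertion ``$c^a=t_{wk_i\omega_i^{\vee}}^b$ for some positive integers $a,b$''; your more explicit version via $c^m=t_\mu$ and the ray description of $[t_\nu^\infty]$ is fine, though strictly speaking $[t_\nu^\infty]$ depends on the open \emph{face} of the Coxeter arrangement containing $\nu$, and you are implicitly using (via Proposition~\ref{prop:minimal}) that for $\nu=wk_i\omega_i^\vee$ this face is one-dimensional.
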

\begin{proof}
We must have $c^a=t_{wk_i\omega_i^{\vee}}^b$ for some positive integers $a,b$.  Let $v \in W$ be such that $vcv^{-1}=c'$ (guaranteed to exist by Proposition~\ref{prop:coxeters-are-conjugate}), then we have
\begin{align*}
    (c')^a&=(vcv^{-1})^a \\
    &=vc^av^{-1} \\
    &=vt_{wk_i\omega_i^{\vee}}^bv^{-1} \\
    &=vt_{wbk_i\omega_i^{\vee}}v^{-1} \\
    &=t_{vwk_i\omega_i^{\vee}}^b.
\end{align*}
Thus we can take $u=vw$.
\end{proof}

In light of Corollary~\ref{cor:coxeter-node-makes-sense}, we say $\alpha_i$ is a \emph{Coxeter node} for $\tilde{W}$ if $[t_{wk_i\omega_i^{\vee}}^{\infty}]=[c^{\infty}]$ for some $w \in W$ and some Coxeter element $c$.  By Corollary~\ref{cor:coxeter-node-makes-sense} (except when $\tilde{W}$ is the affine symmetric group, where all nodes are Coxeter nodes by Theorem~\ref{thm:type-A}) the Coxeter node is unique if it exists.

In the classification of irreducible finite root systems a standard reduction uses the fact that (except in type $A$) every irreducible Dynkin diagram contains either a unique node $\alpha_i$ adjacent to three other nodes or a unique multiple edge.  This multiple edge, if it exists, connects two nodes whose corresponding simple roots have different lengths; call the longer one $\alpha_i$.  In either case, we say $\alpha_i$ is the \emph{heavy node}.

Recall that $W$ is a finite Weyl group so its Dynkin diagram is a tree, which is a bipartite graph. Let $S=F\sqcup U$ (where $F$ stands for ``filled" and $U$ stands for ``unfilled") be a bipartition of the Dynkin diagram, and define two elements $s_F:=\prod_{s_i\in F}s_i$ and $s_U:=\prod_{s_i\in U}s_i$ in the Weyl group. Notice that since both $F$ and $U$ are disconnected, the order in which we take these products does not matter. Also recall that $\xi$ is the highest root of the root system $\Phi$, and $s_i\xi=\xi$ for all but one $s_i\in S$ when $\Phi$ is not of type $A$. This means that exactly one of $s_F$ and $s_U$ fixes $\xi$.

The following theorem is the main result of this section, which provides various characterizations for the Coxeter node.
\begin{theorem}
\label{thm:coxeter-classification}
Let $\tilde{W}$ be an affine Weyl group other than the affine symmetric group.  The following are equivalent for a node $\alpha_i$ in the Dynkin diagram:
\begin{enumerate}
    \item $\alpha_i$ is a Coxeter node;
    \item $\alpha_i$ is the heavy node;
    \item the action $\cdots s_Us_Fs_Us_F\xi$ of some finite alternating product of $s_U,s_F$ on $\xi$ is equal to the root $\alpha_i$. 
\end{enumerate}
\end{theorem}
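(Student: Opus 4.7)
The plan is to close the cycle $(1)\Rightarrow(2)\Rightarrow(3)\Rightarrow(1)$. The easiest step is $(1)\Rightarrow(2)$: by Corollary~\ref{cor:coxeter-node-makes-sense} the Coxeter node is unique if it exists, while the Dynkin classification recalled in the statement shows the heavy node is always unique, so once the chain $(2)\Rightarrow(3)\Rightarrow(1)$ is established, uniqueness forces the Coxeter node to coincide with the heavy node.

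For $(2)\Rightarrow(3)$, I would work directly with the bipartite Coxeter orbit of $\xi$. Among the simple reflections of $W$, exactly one---call it $s_{j_0}$, the one to which $\alpha_0$ attaches in the extended Dynkin diagram---moves $\xi$. Without loss of generality $j_0\in F$, so $s_U$ fixes $\xi$ and the first nontrivial step of the alternating product produces $s_F\xi=\xi-\langle\xi,\alpha_{j_0}^\vee\rangle\alpha_{j_0}$, a root of strictly smaller height. Iterating $s_U$ and $s_F$ traces the $(s_Us_F)$-orbit of $\xi$ in the root system, and I would argue the heavy node lies in this orbit. For the simply-laced types ($\tilde D_n,\tilde E_n$) the heavy node is the unique trivalent node, and the cancellation pattern coming from the factors of $s_U$ sitting at all three branches forces the orbit to collapse onto the trivalent simple root; the prototype is a direct $\tilde D_4$ calculation giving $s_Us_F\xi=\alpha_2$. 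For types with a multiple edge ($\tilde B_n,\tilde C_n,\tilde F_4,\tilde G_2$) a short direct computation yields the long simple endpoint. I expect this direction to be the main obstacle, since I do not see a fully uniform argument and anticipate a case-by-case verification on each Dynkin type.

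For $(3)\Rightarrow(1)$, I would invoke Proposition~\ref{prop:explicitword} applied to $\lambda^{(0)}=k_i\omega_i^\vee$. Since $\langle\lambda^{(0)},\alpha_j\rangle=k_i\delta_{ij}\geq 0$ for $j=1,\ldots,n$ while $\langle\lambda^{(0)},\alpha_0\rangle=-\langle k_i\omega_i^\vee,\xi\rangle<0$, the first step of the construction is forced to be $s_0$, after which $\lambda$ moves by $s_\xi$. The hypothesis $(3)$ gives an element $u=\cdots s_Us_Fs_Us_F$ with $u\xi=\alpha_i$; I would dualize this to record the sequence of simple reflections to apply on the coweight side so that $\lambda$ returns to a positive multiple of itself after a single pass through a bipartite Coxeter product $s_0 s_F s_U$ in $\tilde W$. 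The resulting periodic infinite reduced word then has period equal to a Coxeter element $c$ of $\tilde W$, establishing $[c^\infty]=[t_{k_i\omega_i^\vee}^\infty]$. The technical subtlety here is confirming that the period is truly of length $n+1$ with each generator appearing exactly once---i.e.\ that the duality between the word in $(3)$ and the Coxeter period is exact and does not accidentally produce a repeated or incomplete period.
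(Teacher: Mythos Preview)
Your overall logical structure matches the paper's (they also reduce everything to showing that the heavy node is a Coxeter node, using uniqueness from Corollary~\ref{cor:coxeter-node-makes-sense}, and they also verify $(2)\Leftrightarrow(3)$ type by type). The gap is in your $(3)\Rightarrow(1)$ step: running Proposition~\ref{prop:explicitword} from $\lambda^{(0)}=k_i\omega_i^\vee$ does \emph{not} produce a Coxeter period. After the forced first step $s_0$, one has $\langle\lambda^{(1)},\alpha_j\rangle=0$ for every $j$ except $j_0$ (the unique finite node adjacent to $\alpha_0$) and $j=i$, so you cannot legally apply all of $s_F$ next---the procedure demands a \emph{strictly} negative pairing at each step. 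Worse, even ignoring this, the sequence does not close up after $n{+}1$ steps: in $\tilde D_4$ with heavy node $\alpha_2$, the forced sequence $s_0,s_2,s_1,s_3,s_4$ yields $\lambda^{(5)}=\omega_2^\vee-\xi-\alpha_2\neq\omega_2^\vee$. So the ``dualization'' you allude to cannot be a direct transfer of the word from $(3)$ to the coweight side.

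The paper's resolution is to \emph{not} start from $\omega_i^\vee$. Instead it writes down an explicit weight $\lambda=\sum_j(\langle\alpha_j^\vee,\xi^{(j)}\rangle-b_j)\omega_j$ (using a linear-extension labeling with the heavy node last) engineered so that Proposition~\ref{prop:explicitword} applied to $\lambda$ yields the sequence $s_0s_1\cdots s_n$ with each pairing $\langle\lambda^{(j)},\alpha_j\rangle=-\tfrac{\langle\alpha_j,\alpha_j\rangle}{2}b_j<0$ and with $\lambda^{(n+1)}=\lambda$; this gives $[t_\lambda^\infty]=[c^\infty]$. The real work is then a two-stage $W$-conjugacy argument: first $\lambda$ is carried by products of $s_{N_k}$'s to a weight $\mu$ whose $\{\omega_j\}$-coefficients are $\pm b_j$, and then condition~$(3)$ is used via a comparison of the bipartite action on $\sum b_j\alpha_j$ (root basis) with the action on $\sum(-1)^{d(j,n)}b_j\omega_j$ (weight basis) to show $\pm\mu$ is $W$-conjugate to $\omega_n$. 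This interplay between the two bases is the substantive content hiding behind your word ``dualize''; it is not a formality, and your proposal does not supply it.
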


\begin{cor}
A Coxeter node exists in all affine types (and is unique except for the affine symmetric group) so infinite powers $[c^\infty]$ of Coxeter elements are always minimal in $\tilde{\mc{W}}$.
\end{cor}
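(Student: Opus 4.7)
The plan is to leverage Theorem~\ref{thm:coxeter-classification} together with Proposition~\ref{prop:minimal}; granted these, the corollary is essentially a bookkeeping step.

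First I would establish existence of a Coxeter node. Outside of the affine symmetric group, the standard classification of irreducible finite Dynkin diagrams shows that any such diagram contains either a unique trivalent vertex or a unique multiple edge; in the multiple-edge case one selects the endpoint carrying the long root. This produces a well-defined heavy node $\alpha_i$, and the equivalence $(1)\Leftrightarrow(2)$ of Theorem~\ref{thm:coxeter-classification} identifies it as a Coxeter node; uniqueness outside type $A$ was already noted following Corollary~\ref{cor:coxeter-node-makes-sense}. For the affine symmetric group, Theorem~\ref{thm:type-A} gives that each infinite Coxeter power $[c^\infty]$ is a minimal element, and Proposition~\ref{prop:minimal} then forces $[c^\infty] = [t^\infty_{wk_i\omega_i^\vee}]$ for some $i$, so Coxeter nodes exist in type $A$ as well (indeed every node is one).

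For the minimality statement: given a Coxeter node $\alpha_i$, by its defining property there is some Coxeter element $c$ and some $w \in W$ with $[c^\infty] = [t^\infty_{wk_i\omega_i^\vee}]$, which is minimal by Proposition~\ref{prop:minimal}. For an arbitrary Coxeter element $c'$, Proposition~\ref{prop:coxeters-are-conjugate} supplies $v \in W$ with $c' = vcv^{-1}$, and the calculation in the proof of Corollary~\ref{cor:coxeter-node-makes-sense} yields $[(c')^\infty] = [t^\infty_{vwk_i\omega_i^\vee}]$, again minimal by Proposition~\ref{prop:minimal}. In type $A$ the minimality conclusion is already part of Theorem~\ref{thm:type-A}.

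The real work here sits in Theorem~\ref{thm:coxeter-classification} (and, behind it, Proposition~\ref{prop:coxeters-are-conjugate}); with those in hand the only subtlety is remembering to split off the affine symmetric group, since both of those statements exclude it, but Theorem~\ref{thm:type-A} supplies exactly what is needed there.
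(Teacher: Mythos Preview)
Your proposal is correct and matches the paper's intended reasoning: the corollary is stated in the paper without proof, as it is meant to follow immediately from Theorem~\ref{thm:coxeter-classification} (existence of the heavy node, hence of a Coxeter node, outside type $A$), Corollary~\ref{cor:coxeter-node-makes-sense} (uniqueness and transfer to arbitrary Coxeter elements), Proposition~\ref{prop:minimal} (minimality of translation elements), and Theorem~\ref{thm:type-A} for the type $A$ case. Your write-up simply makes these implicit steps explicit.
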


\begin{remark}
Note that since one of $s_U$, $s_F$ fixes $\xi$, whether $s_U$ or $s_F$ is first applied to $\xi$ does not matter. In addition, both the uniqueness and existence of the simple root $\alpha_i$ satisfying (3) in Theorem~\ref{thm:coxeter-classification} should not be clear from the definition.
\end{remark}

\begin{ex}\label{ex:bipartite-E6}
We provide an explicit calculation for condition (3) of Theorem~\ref{thm:coxeter-classification} in type $E_6$. The highest root is $\xi=\alpha_1+2\alpha_2+3\alpha_3+2\alpha_4+\alpha_5+2\alpha_6$ (see the root indexing in Figure~\ref{fig:main-fig}). Consider $F=\{\alpha_1,\alpha_3,\alpha_5\}$ and $U=\{\alpha_2,\alpha_4,\alpha_6\}$ as shown in Figure~\ref{fig:bipartite-E6}, where we write down the coefficients for each of the roots $\xi,s_U\xi,s_Fs_U\xi,\ldots$ as linear combinations of the simple roots. As we can see, in the end, $s_Us_Fs_Us_Fs_U\xi=\alpha_3$ as desired.

\begin{figure}[ht!]
\centering
\begin{tikzpicture}[scale=0.7]
\draw(0,0)--(4,0);
\draw(2,0)--(2,1);
\node[draw,shape=circle,fill=black,scale=0.5][label=below: {$1$}] at (0,0) {};
\node[draw,shape=circle,fill=white,scale=0.5][label=below: {$2$}] at (1,0) {};
\node[draw,shape=circle,fill=black,scale=0.5][label=below: {$3$}] at (2,0) {};
\node[draw,shape=circle,fill=white,scale=0.5][label=below: {2}] at (3,0) {};
\node[draw,shape=circle,fill=black,scale=0.5][label=below: {1}] at (4,0) {};
\node[draw,shape=circle,fill=white,scale=0.5][label=right: {2}] at (2,1) {};
\node at (-1,0) {$\xi$};

\draw(8,0)--(12,0);
\draw(10,0)--(10,1);
\node[draw,shape=circle,fill=black,scale=0.5][label=below: {$1$}] at (8,0) {};
\node[draw,shape=circle,fill=white,scale=0.5][label=below: {$2$}] at (9,0) {};
\node[draw,shape=circle,fill=black,scale=0.5][label=below: {$3$}] at (10,0) {};
\node[draw,shape=circle,fill=white,scale=0.5][label=below: {2}] at (11,0) {};
\node[draw,shape=circle,fill=black,scale=0.5][label=below: {1}] at (12,0) {};
\node[draw,shape=circle,fill=white,scale=0.5][label=right: {1}] at (10,1) {};
\node at (7,0) {$s_U\xi$};

\draw(0,-2.5)--(4,-2.5);
\draw(2,-2.5)--(2,-1.5);
\node[draw,shape=circle,fill=black,scale=0.5][label=below: {$1$}] at (0,-2.5) {};
\node[draw,shape=circle,fill=white,scale=0.5][label=below: {$2$}] at (1,-2.5) {};
\node[draw,shape=circle,fill=black,scale=0.5][label=below: {$2$}] at (2,-2.5) {};
\node[draw,shape=circle,fill=white,scale=0.5][label=below: {2}] at (3,-2.5) {};
\node[draw,shape=circle,fill=black,scale=0.5][label=below: {1}] at (4,-2.5) {};
\node[draw,shape=circle,fill=white,scale=0.5][label=right: {1}] at (2,-1.5) {};
\node at (-1.5,-2.5) {$s_Fs_U\xi$};

\draw(8,-2.5)--(12,-2.5);
\draw(10,-2.5)--(10,-1.5);
\node[draw,shape=circle,fill=black,scale=0.5][label=below: {$1$}] at (8,-2.5) {};
\node[draw,shape=circle,fill=white,scale=0.5][label=below: {$1$}] at (9,-2.5) {};
\node[draw,shape=circle,fill=black,scale=0.5][label=below: {$2$}] at (10,-2.5) {};
\node[draw,shape=circle,fill=white,scale=0.5][label=below: {1}] at (11,-2.5) {};
\node[draw,shape=circle,fill=black,scale=0.5][label=below: {1}] at (12,-2.5) {};
\node[draw,shape=circle,fill=white,scale=0.5][label=right: {1}] at (10,-1.5) {};
\node at (6.5,-2.5) {$s_Us_Fs_U\xi$};

\draw(0,-5)--(4,-5);
\draw(2,-5)--(2,-4);
\node[draw,shape=circle,fill=black,scale=0.5][label=below: {$0$}] at (0,-5) {};
\node[draw,shape=circle,fill=white,scale=0.5][label=below: {$1$}] at (1,-5) {};
\node[draw,shape=circle,fill=black,scale=0.5][label=below: {$1$}] at (2,-5) {};
\node[draw,shape=circle,fill=white,scale=0.5][label=below: {1}] at (3,-5) {};
\node[draw,shape=circle,fill=black,scale=0.5][label=below: {0}] at (4,-5) {};
\node[draw,shape=circle,fill=white,scale=0.5][label=right: {1}] at (2,-4) {};
\node at (-1.5,-5) {$(s_Fs_U)^2\xi$};

\draw(8,-5)--(12,-5);
\draw(10,-5)--(10,-4);
\node[draw,shape=circle,fill=black,scale=0.5][label=below: {$0$}] at (8,-5) {};
\node[draw,shape=circle,fill=white,scale=0.5][label=below: {$0$}] at (9,-5) {};
\node[draw,shape=circle,fill=black,scale=0.5][label=below: {$1$}] at (10,-5) {};
\node[draw,shape=circle,fill=white,scale=0.5][label=below: {0}] at (11,-5) {};
\node[draw,shape=circle,fill=black,scale=0.5][label=below: {0}] at (12,-5) {};
\node[draw,shape=circle,fill=white,scale=0.5][label=right: {0}] at (10,-4) {};
\node at (6.3,-5) {$s_U(s_Fs_U)^2\xi$};
\end{tikzpicture}
\caption{Example~\ref{ex:bipartite-E6} of (3) in Theorem~\ref{thm:coxeter-classification} for type $E_6$: $\alpha_3=s_Us_Fs_Us_Fs_U\xi.$}
\label{fig:bipartite-E6}
\end{figure}
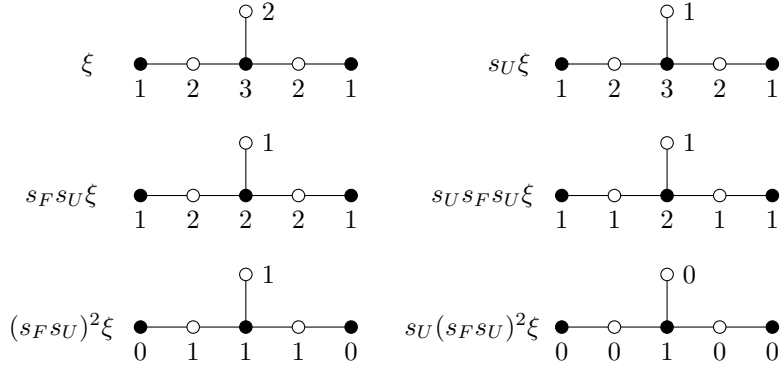
\end{ex}

\begin{proof}[Proof of Theorem~\ref{thm:coxeter-classification}]
We first establish the equivalence between (2) and (3). To prove (2)$\Rightarrow$(3), it suffices to provide an explicit realization of the heavy node $\alpha_i$ as $\cdots s_Us_F\xi$ for each type; this computation is straightforward. Let the heavy node $\alpha_i\in F$ be filled (see Figure~\ref{fig:main-fig} for the labels), so that the bipartite coloring is determined. Table~\ref{tab:bipartite} provides the necessary realizations and establishes the existence for (3). 
\begin{table}[ht!]
\centering
\begin{tabular}{c|c}
Type & Heavy node \\\hline
$B_n$ & $\alpha_{n-1}=(s_Us_F)^{\left\lfloor n/2\right\rfloor}\xi$ \\
$C_n$ & $\alpha_{n}=(s_Us_F)^{\left\lfloor n/2\right\rfloor}\xi$ \\
$D_n$ & $\alpha_{n-2}=(s_Us_F)^{\left\lfloor (n-1)/2\right\rfloor}\xi$ \\
$E_6$ & $\alpha_3=(s_Us_F)^3\xi$ \\
$E_7$ & $\alpha_3=(s_Us_F)^4\xi$ \\
$E_8$ & $\alpha_5=(s_Us_F)^7\xi$ \\
$F_4$ & $\alpha_2=(s_Us_F)^3\xi$ \\
$G_2$ & $\alpha_2=s_Us_F\xi$
\end{tabular}
\caption{The heavy node as $\alpha_i=\cdots s_Us_Fs_Us_F\xi$ for all finite Weyl groups not of type $A$.}
\label{tab:bipartite}
\end{table}

It is easy to also check that in all cases, when we have $(s_Us_F)^k\xi=\alpha_i$ the heavy node, the roots $\xi,s_F\xi$ (which might be equal to $\xi$), $s_Us_F\xi,\ldots,(s_Us_F)^k\xi$ are all positive with decreasing height and contain exactly one simple root, the heavy node, in their common support. Then $s_U\alpha_i=-\alpha_i$ so going back up in height, the orbit of $\xi$ under the subgroup of $W$ generated by $s_U,s_F$ consists of $\pm\xi,\pm s_F\xi,\ldots,\pm\alpha_i$, with only one simple root. This establishes the uniqueness for (3), and thus we have that (2) is equivalent to (3).

We now show that (2) and (3) imply (1), which is the key of this theorem. If this is proved, we establish the existence of the Coxeter node. And since we know that Coxeter is unique if it exists by Corollary~\ref{cor:coxeter-node-makes-sense}, we can conclude that the Coxeter node has to be the heavy node, which means that the direction (1)$\Rightarrow$(2) will be established as well. 

For this part of the paper only, we will use a different labeling from Figure~\ref{fig:main-fig}. Since the Dynkin diagram is a tree, we make can make it into a poset by setting $u \leq v$ if $v$ is on the unique simple path between $u$ and the heavy node. The heavy node is then the unique maximum element of this poset and we label the simple roots by taking any arbitrary linear extension, thus $\alpha_n$ will be the heavy node. An example is shown in Figure~\ref{fig:new-label-E8}. For $i=1,\ldots,n-1$, define $g(i)=j$ if $\alpha_j$ is adjacent to $\alpha_i$ in the Dynkin diagram and on the path from $\alpha_i$ to $\alpha_n$. For example in Figure~\ref{fig:new-label-E8}, $g(1)=2$, $g(2)=3$, $g(7)=8$, $g(5)=6$, etc. 
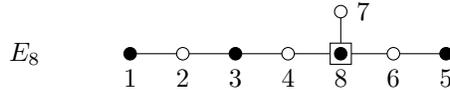
\begin{figure}[ht!]
\centering
\begin{tikzpicture}[scale=0.7]
\draw(12,-4)--(18,-4);
\draw(16,-4)--(16,-3.2);
\node[draw,shape=circle,fill=black,scale=0.5][label=below: {$1$}] at (12,-4) {};
\node[draw,shape=circle,fill=white,scale=0.5][label=below: {$2$}] at (13,-4) {};
\node[draw,shape=circle,fill=black,scale=0.5][label=below: {$3$}] at (14,-4) {};
\node[draw,shape=circle,fill=white,scale=0.5][label=below: {$4$}] at (15,-4) {};
\node[draw,shape=rectangle,fill=white,scale=1.2] at (16,-4) {};
\node[draw,shape=circle,fill=black,scale=0.5][label=below: {$8$}] at (16,-4) {};
\node[draw,shape=circle,fill=white,scale=0.5][label=below: {$6$}] at (17,-4) {};
\node[draw,shape=circle,fill=black,scale=0.5][label=below: {$5$}] at (18,-4) {};
\node[draw,shape=circle,fill=white,scale=0.5][label=right: {$7$}] at (16,-3.2) {};
\node at (10,-4) {$E_8$};
\end{tikzpicture}
\caption{A new labeling for the Dynkin diagram of type $E_8$ used in the proof of Theorem~\ref{thm:coxeter-classification}.}
\label{fig:new-label-E8}
\end{figure}

Write $b_i=\langle\xi,\omega_i^{\vee}\rangle$ for all $i$. In other words, $\xi=\sum_{i=1}^n b_i\alpha_i$. For $j=1,\ldots,n$, write $\xi^{(j)}=\sum_{i\geq j}b_i\alpha_i$. For example, $\xi^{(1)}=\xi$ and $\xi^{(n)}=b_n\alpha_n$; typically these are not roots. Define a weight $\lambda\in Q$ by
\[
\lambda:=\sum_{i=1}^n \big(\langle \alpha_i^{\vee},\xi^{(i)}\rangle-b_i\big)\omega_i.
\]
For the remainder of the proof, we will first use Proposition~\ref{prop:explicitword} to show that $[t_{\lambda}^{\infty}]=[c^{\infty}]$ for some Coxeter element $c\in\widetilde{W}$, and then use condition (3) to show that $\lambda$ and $\omega_n$ are in the same $W$-orbit.

\

\noindent\textbf{Step one:} prove that $[t_{\lambda}^{\infty}]=[c^{\infty}]$ for $c=s_0s_1\cdots s_n$ with the linear extension labelling.

We start with $\lambda^{(0)}=\lambda$ and go through the procedure described in Section~\ref{sec:explicit-words}. By definition of our labels, for $i<n$:
\[
\langle \alpha_i^{\vee},\xi^{(i)}\rangle=\langle\alpha_i^{\vee},b_i\alpha_i+b_{g(i)}\alpha_{g(i)}\rangle=2b_i+b_{g(i)}\langle\alpha_i^{\vee},\alpha_{g(i)}\rangle.
\]
Continuing the calculation:
\begin{align*}
\langle\lambda,\xi\rangle=&\sum_{i=1}^{n-1}\big(b_i+b_{g(i)}\langle\alpha_i^{\vee},\alpha_{g(i)}\rangle\big)b_i\langle\alpha_i,\omega_i\rangle+ b_n^2\langle \alpha_n,\omega_n\rangle\\
=&\sum_{i=1}^nb_i^2\langle\alpha_i,\omega_i\rangle+\sum_{i=1}^{n-1}b_ib_{g(i)}\langle \alpha_i^{\vee},\alpha_{g(i)}\rangle\langle\alpha_i,\omega_i\rangle\\
=&\sum_{i=1}^n\frac{\langle\alpha_i,\alpha_i\rangle}{2}b_i^2+\sum_{i=1}^{n-1}b_ib_{g(i)}\langle\alpha_i,\alpha_{g(i)}\rangle\\
=&\frac{1}{2}\left\langle \sum_{i=1}^n b_i\alpha_i,\sum_{i=1}^nb_i\alpha_i\right\rangle=\frac{1}{2}\langle\xi,\xi\rangle
\end{align*}
where the second to last equality comes from the fact that as $i$ ranges from $1$ to $n-1$, the pairs $(i,g(i))$ range over all edges in the Dynkin diagram.

This calculation shows that $\langle\lambda,\xi\rangle>0$ so $\langle\lambda,\alpha_0\rangle<0$ where $\alpha_0=-\xi$. By Proposition~\ref{prop:explicitword}, to obtain an infinite reduced word $\mathbf{j}=j_1j_2\ldots$ such that $[\mathbf{j}]=[t_{\lambda}^\infty]$, we can choose our first index $j_1$ to be $0$ and 
\[
\lambda^{(1)}=s_0\lambda^{(0)}=\lambda-\frac{2\langle\lambda,\xi\rangle}{\langle\xi,\xi\rangle}\xi=\lambda-\xi.
\]
We now show via induction that we can choose $j_i=i-1$ for $i=1,2,\ldots,n+1$, and obtain $\lambda^{(i)}=\lambda-\xi^{(i)}$. The base case $i=1$ is already done. Suppose that we have $\lambda^{(i)}=\lambda-\xi^{(i)}$, and consider
\begin{align*}
\langle\lambda^{(i)},\alpha_i\rangle=&\langle \lambda-\xi^{(i)},\alpha_i\rangle\\
=&\langle\lambda,\alpha_i\rangle-\langle\xi^{(i)},\alpha_i\rangle\\
=&(\langle\alpha_i^{\vee},\xi^{(i)}\rangle-b_i)\langle\alpha_i,\omega_i\rangle-\langle\xi^{(i)},\alpha_i\rangle\\
=&-\frac{\langle\alpha_i,\alpha_i\rangle}{2}b_i<0.
\end{align*}
Thus, we can choose $j_{i+1}=i$ and 
\[
\lambda^{(i+1)}=s_i\lambda^{(i)}=\lambda^{(i)}-\frac{2\langle\lambda^{(i)},\alpha_i\rangle}{\langle\alpha_i,\alpha_i\rangle}\alpha_i=\lambda-\xi^{(i)}-b_i\alpha_i=\lambda-\xi^{(i+1)}
\]
as desired. Thus, we arrive at $\lambda^{(n+1)}=\lambda-\xi^{(n+1)}=\lambda$. As we have returned to $\lambda$, continuing this procedure gives $[t_{\lambda}^{\infty}]=[c^{\infty}]$ for $c=s_0s_1\cdots s_n$.

\

\noindent\textbf{Step two:} prove that $\lambda=w\mu$ for some $w\in W$ and some intermediate weight $\mu$ which resembles the highest root $\xi$ in certain ways.

For $k=0,1,\ldots,m$, let $D_k$ be the set of nodes in the Dynkin diagram of the finite root system with distance exactly $k$ to the heavy node $\alpha_n$ where $m$ is the maximum distance from $\alpha_n$. Let $N_k=D_k\sqcup D_{k-2}\sqcup D_{k-4}\cdots$. For each pair of nodes $i,j$, we also let $d(i,j)$ denote the distance in the Dynkin diagram between $\alpha_i$ and $\alpha_j$. In particular, $N_0=\{\alpha_n\}$ and each $N_k$ consists of disconnected set of nodes since the Dynkin diagram is a tree. For a set of disconnected nodes $N$, write $s_N=\prod_{i\in N}s_i$ which is an involution. Also recall that $F=N_0\sqcup N_2\sqcup\cdots$ and $U=N_1\sqcup N_3\sqcup\cdots$ given our chosen bipartite coloring. 

Let $\alpha_a\in\Delta$ be the unique neighbor of $\alpha_0$ in the affine Dynkin diagram. In other words, it is the unique simple root such that $\langle\alpha_a,\xi\rangle\neq0$, in which case $\langle\alpha_a,\xi\rangle>0$. We observe that $\alpha_a\in N_{m-1}\cup N_m$ for all types of interest.

Construct weights $\mu^{(0)},\mu^{(1)},\ldots,\mu^{(m)}$ such that $\mu^{(i)}=s_{N_{i-1}}\cdots s_{N_0}\lambda$ for all $i$. We have $\mu^{(0)}=\lambda$. We study these weights by expanding them in the basis of the fundamental weights $\{\omega_i\}_{i=1}^n$. Write $\mu^{(j)}=\sum_{i=1}^n c_i^{(j)}\omega_i$. We use induction on $j$ to prove the following:
\[
c_i^{(j)}=
\begin{cases}
(-1)^{j-d(i,n)}b_i, & \text{ if }d(i,n)\leq j,\\
\langle\alpha_i^{\vee},\xi^{(i)}\rangle-b_i, & \text{ if }d(i,n)>j.
\end{cases}
\]
The base case $j=0$ is clear since $\mu^{(0)}=\lambda$, where the only $i$ with $d(i,n)=0$ is $i=n$ and we know that here $\langle\alpha_n^{\vee},\xi^{(n)}\rangle-b_n=2b_n-b_n=b_n$ as desired.

Now, by considering the inner product with fundamental coweights, we can write each root in the basis $\{\omega_i\}_{i=1}^n$ as
\[
\alpha_i=2\omega_i+\sum_{j\sim i}\langle \alpha_i,\alpha_j^{\vee}\rangle\omega_j
\]
where $j\sim i$ means that $d(i,j)=1$. Thus, in the basis $\{\omega_i\}_{i=1}^n$, when reflecting by $s_i$, only the coefficients with distance at most 1 to $\alpha_i$ can be affected. This means that when $d(i,n)>j$, $c_i^{(j)}$ remains unchanged from $c_i^{(0)}=\langle\alpha_i^{\vee},\xi^{(i)}\rangle-b_i$. If we have a weight $\nu=\sum c_i\omega_i$, then
\[
s_i\nu=\nu-\langle \alpha_i^{\vee},\nu\rangle\alpha_i=\nu-c_i\left(2\omega_i+\sum_{j\sim i}\langle\alpha_i,\alpha_j^{\vee}\rangle\omega_j\right).
\]

Suppose that we have $\mu^{(j)}$ as described above. Consider $\mu^{(j+1)}=s_{N_j}\mu^{(j)}$ and some $i$ with $d(i,n)\leq j+1$. If $d(i,n)=j+1$, the only $s_k$'s in $s_{N_j}$ that affect the coefficient at $\omega_i$ is $s_{g(i)}$. In this case,
\begin{align*}
c_i^{(j+1)}=&c_i^{(j)}-c_{g(i)}^{(j)}\langle\alpha_i^{\vee},\alpha_{g(i)}\rangle\\
=&\langle \alpha_i^{\vee},\xi^{(i)}\rangle-b_i-(-1)^{j-d(g(i),n)}b_{g(i)}\langle\alpha_i^{\vee},\alpha_{g(i)}\rangle\\
=&(2b_i+b_{g(i)}\langle\alpha_i^{\vee},\alpha_{g(i)}\rangle)-b_i-b_{g(i)}\langle\alpha_i^{\vee},\alpha_{g(i)}\rangle=b_i
\end{align*}
as desired. If $d(i,n)\leq j$ and $d(i,n)\equiv j\bmod 2$, i.e. $\alpha_i\in N_j$, we have
\[
c_i{(j+1)}=c_i^{(j)}-2c_i^{(j)}=(-1)^{j+1-d(i,n)}b_i
\]
as desired. Finally, if $d(i,n)<j$ and $\alpha_i\notin N_j$, as $j\leq m-1$, $d(i,n)\leq m-2$ so $\alpha_i$ cannot possibly be $\alpha_a$, thus we have $\langle\alpha_i,\xi\rangle=0$. Expanding, we obtain
\[
0=\langle\alpha_i,\xi\rangle=b_i\langle\alpha_i,\alpha_i\rangle+\sum_{i'\sim i}b_{i'}\langle\alpha_i,\alpha_{i'}\rangle\\
\Rightarrow 0=2b_i+\sum_{i'\sim i}b_{i'}\langle\alpha_i^{\vee},\alpha_{i'}\rangle.
\]
Consequently,
\begin{align*}
c_i^{(j+1)}=&c_i^{(j)}-\sum_{i'\sim i}c_{i'}^{(j)}\langle \alpha_i^{\vee},\alpha_{i'}\rangle\\
=&(-1)^{j-d(i,n)}b_i-\sum_{i'\sim i}(-1)^{j-d(i',n)}b_{i'}\langle \alpha_i^{\vee},\alpha_{i'}\rangle\\
=&(-1)^{j+1-d(i,n)} \left(-b_i-\sum_{i'\sim i}b_{i'}\langle\alpha_i^{\vee},\alpha_{i'}\rangle\right)\\
=&(-1)^{j+1-d(i,n)}b_i.
\end{align*}
The induction step goes through. In the end, we see that $\mu:=\mu^{(m)}$ in the $\{\omega_i\}_{i=1}^n$ basis has the same coefficients as $\xi$ in the $\{\alpha_i\}_{i=1}^n$ basis, but with alternating signs. With the same calculation as above, we also see that if $a\in s_F$, then $s_F\mu=-\mu$ and if $a\in s_U$, $s_U\mu=-\mu$. Both of $\mu$ and $-\mu$ are in the $W$-orbit of $\lambda$. For the next step, we consider them together. 

An example of $\mu^{(0)},\mu^{(1)},\ldots,\mu,-\mu$ for type $E_6$ is shown in Figure~\ref{fig:mu-E6}, where every weight is labeled by its coefficients when decomposed in the basis $\{\omega_i\}_{i=1}^n$.
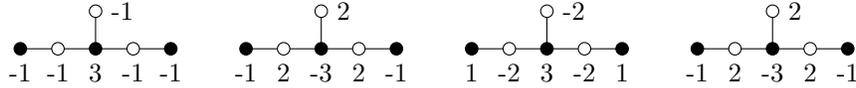
\begin{figure}[ht!]
\centering
\begin{tikzpicture}[scale=0.5]
\draw(0,0)--(4,0);
\draw(2,0)--(2,1);
\node[draw,shape=circle,fill=black,scale=0.5][label=below: {-1}] at (0,0) {};
\node[draw,shape=circle,fill=white,scale=0.5][label=below: {-1}] at (1,0) {};
\node[draw,shape=circle,fill=black,scale=0.5][label=below: {3}] at (2,0) {};
\node[draw,shape=circle,fill=white,scale=0.5][label=below: {-1}] at (3,0) {};
\node[draw,shape=circle,fill=black,scale=0.5][label=below: {-1}] at (4,0) {};
\node[draw,shape=circle,fill=white,scale=0.5][label=right: {-1}] at (2,1) {};

\draw(6,0)--(10,0);
\draw(8,0)--(8,1);
\node[draw,shape=circle,fill=black,scale=0.5][label=below: {-1}] at (6,0) {};
\node[draw,shape=circle,fill=white,scale=0.5][label=below: {2}] at (7,0) {};
\node[draw,shape=circle,fill=black,scale=0.5][label=below: {-3}] at (8,0) {};
\node[draw,shape=circle,fill=white,scale=0.5][label=below: {2}] at (9,0) {};
\node[draw,shape=circle,fill=black,scale=0.5][label=below: {-1}] at (10,0) {};
\node[draw,shape=circle,fill=white,scale=0.5][label=right: {2}] at (8,1) {};

\draw(12,0)--(16,0);
\draw(14,0)--(14,1);
\node[draw,shape=circle,fill=black,scale=0.5][label=below: {1}] at (12,0) {};
\node[draw,shape=circle,fill=white,scale=0.5][label=below: {-2}] at (13,0) {};
\node[draw,shape=circle,fill=black,scale=0.5][label=below: {3}] at (14,0) {};
\node[draw,shape=circle,fill=white,scale=0.5][label=below: {-2}] at (15,0) {};
\node[draw,shape=circle,fill=black,scale=0.5][label=below: {1}] at (16,0) {};
\node[draw,shape=circle,fill=white,scale=0.5][label=right: {-2}] at (14,1) {};

\draw(18,0)--(22,0);
\draw(20,0)--(20,1);
\node[draw,shape=circle,fill=black,scale=0.5][label=below: {-1}] at (18,0) {};
\node[draw,shape=circle,fill=white,scale=0.5][label=below: {2}] at (19,0) {};
\node[draw,shape=circle,fill=black,scale=0.5][label=below: {-3}] at (20,0) {};
\node[draw,shape=circle,fill=white,scale=0.5][label=below: {2}] at (21,0) {};
\node[draw,shape=circle,fill=black,scale=0.5][label=below: {-1}] at (22,0) {};
\node[draw,shape=circle,fill=white,scale=0.5][label=right: {2}] at (20,1) {};
\end{tikzpicture}
\caption{From left to right: $\mu^{(0)}=\lambda$, $\mu^{(1)}=s_{N_0}\mu^{(0)}$, $\mu=\mu^{(2)}=s_{N_1}\mu^{(1)}$, $-\mu=s_U\mu$, where each weight is written in the basis $\{\omega_i\}_{i=1}^n$.}
\label{fig:mu-E6}
\end{figure}

\

\noindent\textbf{Step three:} prove that $\omega_n=\pm w\mu$ for some $w\in W$.

Consider $\beta=\sum c_i\alpha_i$, written in the basis $\{\alpha_i\}_{i=1}^n$ and $\nu=\sum c_i(-1)^{d(i,n)}\omega_i=\sum_{i\in F}c_i\omega_i-\sum_{i\in U}c_i\omega_i$ written in the basis $\{\omega_i\}_{i=1}^n$ with coefficients having the same magnitude but alternating in sign. Compare
\begin{align*}
s_U\beta=&\beta-\sum_{j\in U}\langle\beta,\alpha_j^{\vee}\rangle\alpha_j\\
=&\beta-\sum_{j\in U}\left(2c_j+\sum_{i\sim j}\langle\alpha_i,\alpha_j^{\vee}\rangle c_i\right)\alpha_j\\
=&\sum_{i\in F}c_i\alpha_i-\sum_{j\in U}c_j\alpha_j-\sum_{i\sim j,i\in F,j\in U}\langle\alpha_i,\alpha_j^{\vee}\rangle c_i\alpha_j
\end{align*}
and
\begin{align*}
s_F\nu=&\nu-\sum_{i\in F}\langle\nu,\alpha_i^{\vee}\rangle\alpha_i=\nu-\sum_{i\in F}c_i\alpha_i\\
=&\sum_{i\in F}c_i\omega_i-\sum_{j\in U}c_j\omega_j-\sum_{i\in F}c_i\left(2\omega_i+\sum_{j\sim i}\langle\alpha_i,\alpha_j^{\vee}\rangle\omega_j\right)\\
=&-\sum_{i\in F}c_i\omega_i-\sum_{j\in U}c_j\omega_j-\sum_{i\sim j,i\in F,j\in U}\langle\alpha_i,\alpha_j^{\vee}\rangle c_i\omega_j
\end{align*}
now having the same coefficients on $U$ but negative signs on $F$. 

Since we can write $\xi$ as $\sum_{i\in F}b_i\alpha_i+\sum_{j\in U}b_j\alpha_j$, $\mu$ or $-\mu$ as $\sum_{i\in F}b_i\omega_i-\sum_{j\in U}b_j\omega_j$, and since we know from condition (3) that $s_Us_F\cdots s_Us_F\xi=\alpha_n$, by the reasoning above, we must have $s_Fs_U\cdots s_Fs_U\mu=\omega_n$ or $s_Fs_U\cdots s_Fs_U(-\mu)=\omega_n$. We conclude that $\omega_n$ is in the same $W$-orbit as $\lambda$.
\end{proof}

\section{Infinite fully commutative elements}
\label{sec:fully-commutative}

\subsection{Fully commutative nodes}

Given infinite reduced words $\mathbf{i}$ and $\mathbf{j}$, we say there is a \emph{braid limit} from $\mathbf{i}$ to $\mathbf{j}$, written $\mathbf{i} \to \mathbf{j}$, if there is a (possibly infinite) sequence of braid moves taking $\mathbf{i}$ to $\mathbf{j}$. Note that $\mathbf{i}\to\mathbf{j}$ does not imply $\mathbf{j}\to\mathbf{i}$ since an infinite sequence of moves might irreversibly send a letter of $\mathbf{i}$ ``to infinity" (see Example 3 of \cite{lam-pylyavskyy}).

The following proposition is a generalization of Lemma 4.6 from \cite{lam-pylyavskyy}. 

\begin{prop}
\label{prop:braid-limit-weak-order}
Let $\mathbf{i}$ and $\mathbf{j}$ be infinite reduced words.  Then $[\mathbf{j}] \leq [\mathbf{i}]$ if and only if $\mathbf{i} \to \mathbf{j}$.
\end{prop}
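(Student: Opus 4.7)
The plan is to prove the two directions separately, with the forward implication being essentially bookkeeping about finite prefixes and the reverse being an inductive construction.

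For the forward direction, suppose $\mathbf{i}\to\mathbf{j}$ via an (at most countable) sequence of moves. Each individual move acts on a finite window, so after any finite initial segment of the sequence the transformed word agrees with $\mathbf{i}$ outside a bounded range. For any $k$, by the definition of braid limit the first $k$ positions eventually stabilize to $s_{j_1}\cdots s_{j_k}$; thus after finitely many moves we obtain a word whose length-$M$ prefix (for $M$ large) is a reduced word for the same element $w$ as the length-$M$ prefix of $\mathbf{i}$ (braid/commutation moves preserve the element), and whose first $k$ letters spell $s_{j_1}\cdots s_{j_k}$. Hence $s_{j_1}\cdots s_{j_k}\leq_R w$, so $\Inv(s_{j_1}\cdots s_{j_k})\subseteq\Inv(w)\subseteq\Inv(\mathbf{i})$. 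Taking the union over $k$ yields $\Inv(\mathbf{j})\subseteq\Inv(\mathbf{i})$, i.e.\ $[\mathbf{j}]\leq[\mathbf{i}]$.

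For the reverse direction, assume $\Inv(\mathbf{j})\subseteq\Inv(\mathbf{i})$. I will build an infinite sequence of moves inductively, producing a family of infinite reduced words $\mathbf{i}=\mathbf{i}^{(0)},\mathbf{i}^{(1)},\ldots$ such that $\mathbf{i}^{(k)}$ begins with $s_{j_1}\cdots s_{j_k}$ and such that the transition from $\mathbf{i}^{(k)}$ to $\mathbf{i}^{(k+1)}$ uses only finitely many braid/commutation moves, all occurring strictly to the right of the first $k$ letters. Write $\mathbf{r}^{(k)}$ for the suffix of $\mathbf{i}^{(k)}$ after position $k$; the inductive hypothesis will be that $\Inv(j_{k+1}j_{k+2}\cdots)\subseteq\Inv(\mathbf{r}^{(k)})$. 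The base case $k=0$ is the standing assumption. For the inductive step, the root $\alpha_{j_{k+1}}$ lies in $\Inv(\mathbf{r}^{(k)})$, so some finite prefix of $\mathbf{r}^{(k)}$ represents a finite element $w$ having $s_{j_{k+1}}$ as a left descent. By Tits' theorem, any reduced word for $w$ can be connected by finitely many braid and commutation moves to a reduced word for $w$ beginning with $s_{j_{k+1}}$; applying this finite sequence to $\mathbf{r}^{(k)}$ produces $\mathbf{i}^{(k+1)}$ whose $(k+1)$-st letter is $s_{j_{k+1}}$. To verify the inductive hypothesis propagates, note that for any reduced word $\mathbf{a}$ we have $\Inv(s_a\mathbf{a}')=\{\alpha_a\}\sqcup s_a\Inv(\mathbf{a}')$, so removing the first letter transports containment: $\Inv(s_{j_{k+1}}s_{j_{k+2}}\cdots)\subseteq\Inv(\mathbf{r}^{(k)})$ translates into the required containment for the new tail.

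Concatenating all these finite move-sequences yields the desired (possibly infinite) sequence of moves $\mathbf{i}\to\mathbf{j}$: every position $k$ is touched only during steps $1,\ldots,k$ of the induction and is frozen thereafter, so each position of the resulting word stabilizes, and by construction it stabilizes to $j_k$. The main technical point—and the step I expect to require the most care—is the bookkeeping around \emph{locality}: one must verify that the braid moves at induction step $k+1$ can indeed be chosen to act only on positions $\geq k+1$ (they are performed on a prefix of $\mathbf{r}^{(k)}$, which lives entirely in those positions) so that the previously placed letters $s_{j_1},\ldots,s_{j_k}$ are genuinely permanent. Given this, the forward and reverse directions combine to give the biconditional.
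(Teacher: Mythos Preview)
The paper does not actually prove this proposition; it explicitly states ``We omit the proof of Proposition~\ref{prop:braid-limit-weak-order} since the arguments in \cite{lam-pylyavskyy} carry over to all types.'' Your argument is correct and is exactly the standard approach from Lam--Pylyavskyy: the forward direction uses that finitely many braid moves preserve the element represented by any sufficiently long prefix, and the reverse direction inductively brings the correct simple generator to the front via Matsumoto's theorem, propagating the containment of inversion sets by the identity $\Inv(s_a\mathbf{a}')=\{\alpha_a\}\sqcup s_a\Inv(\mathbf{a}')$. The locality concern you flag is a non-issue precisely for the reason you give: the moves at step $k+1$ act on a finite prefix of $\mathbf{r}^{(k)}$, which lives entirely in positions $\geq k+1$. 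The only small point worth making explicit is that $\alpha_{j_{k+1}}\in\Inv(w)$ for a finite prefix $w$ genuinely forces $s_{j_{k+1}}$ to be a \emph{left} descent of $w$ (a simple root lies in the inversion set if and only if the corresponding simple reflection is a left descent), which is what licenses invoking Matsumoto's theorem to bring $s_{j_{k+1}}$ to the front.
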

We omit the proof of Proposition~\ref{prop:braid-limit-weak-order} since the arguments in \cite{lam-pylyavskyy} carry over to all types.

\begin{cor}\label{cor:fully-commutative-is-minimal}
If $\mathbf{i}$ is a fully commutative infinite reduced word, then $[\mathbf{i}]$ is a minimal element in $\tilde{\mathcal{W}}$. 
\end{cor}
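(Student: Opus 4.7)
My approach is to apply Proposition~\ref{prop:braid-limit-weak-order}: to show that $[\mathbf{i}]$ is minimal it suffices to verify that every $\mathbf{j}$ with $\mathbf{i}\to\mathbf{j}$ satisfies $\Inv(\mathbf{j})=\Inv(\mathbf{i})$. The plan is to fix such a $\mathbf{j}$ together with a witnessing sequence of moves $\mathbf{i}=\mathbf{i}^{(0)}\to\mathbf{i}^{(1)}\to\cdots$ converging to it, and to analyze what this sequence can look like.

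The first step is to show by induction on $t$ that every finite prefix of every $\mathbf{i}^{(t)}$ represents a fully commutative element. A commutation move at positions $[k,k{+}1]$ leaves the element of every prefix of length $\neq k$ unchanged, while the new length-$k$ prefix is an $\leq_R$-sub-element of the length-$(k{+}1)$ prefix; since full commutativity is inherited by $\leq_R$-sub-elements (as any reduced expression of a sub-element extends to a reduced expression of the full element), the FC property is preserved. On the other hand, a strict braid move at positions $[l,l+m-1]$ with $m=m_{ab}\geq 3$ would exhibit a braid pattern $s_as_bs_a\cdots$ as a contiguous subword of the length-$(l+m-1)$ prefix of $\mathbf{i}^{(t)}$, contradicting full commutativity of that prefix. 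So only commutation moves appear along the sequence.

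The inclusion $\Inv(\mathbf{j})\subseteq\Inv(\mathbf{i})$ already follows from Proposition~\ref{prop:braid-limit-weak-order}, so it remains to prove the reverse. A single commutation move at $[k,k{+}1]$ merely transposes the two roots $\beta_k$ and $\beta_{k{+}1}$ attached to the swapped positions, using that $s_a\alpha_b=\alpha_b$ whenever $s_a,s_b$ commute. Consequently each inversion of $\mathbf{i}$ is carried along by a specific letter of $\mathbf{i}$ whose position evolves under the sequence of commutations, and the inversion $\beta_l$ of the letter at position $l$ of $\mathbf{i}$ belongs to $\Inv(\mathbf{j})$ exactly when that letter's position stays bounded. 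Thus the remaining task is to rule out the possibility that some letter of $\mathbf{i}$ is ``sent to infinity'' by the (possibly infinite) sequence of commutations.

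This final step is the main obstacle. I would exploit the fact that every proper standard parabolic subgroup of an affine Weyl group $\tilde{W}$ is a finite Weyl group, so any infinite reduced word in $\tilde{W}$ must use every simple generator infinitely often (otherwise, after the last occurrence of some $s_a$, the remaining infinite suffix would live in a finite parabolic, a contradiction). In particular the letter $s_a$ at position $l$ of $\mathbf{i}$ is followed by another occurrence of $s_a$ at some position $m>l$, and between these two occurrences some intermediate letter must fail to commute with $s_a$, for otherwise finitely many commutations would bring the two $s_a$'s adjacent, producing the non-reduced subword $s_as_a$. That non-commuting letter acts as a barrier: the letter at position $l$ can never be commuted past position $m-1$ in any $\mathbf{i}^{(t)}$, so its position in $\mathbf{j}$ is finite. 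Therefore $\beta_l\in\Inv(\mathbf{j})$ for every $l$, yielding $\Inv(\mathbf{i})\subseteq\Inv(\mathbf{j})$ and hence $[\mathbf{j}]=[\mathbf{i}]$, proving minimality of $[\mathbf{i}]$.
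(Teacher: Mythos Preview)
Your approach matches the paper's: invoke Proposition~\ref{prop:braid-limit-weak-order}, show that only commutation moves can occur, then argue that no letter is sent to infinity. The first two steps are fine and in fact handled more carefully than in the paper. The gap is in the final step. The barrier $B$ guarantees only that the \emph{relative} order $L_l\prec B\prec L_m$ persists through all commutation moves; it does not by itself pin $L_l$ to positions $\leq m-1$. The letter $L_m$ may itself drift rightward (by letting letters originally at positions $>m$ that commute with $s_a$ pass it leftward), and once $L_m$ moves right, so can $B$, and then so can $L_l$. So the assertion ``the letter at position $l$ can never be commuted past position $m-1$'' does not follow from the barrier alone.

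The clean fix---which is also what the paper's terse argument is gesturing at---is to argue by contradiction at the level of $\mathbf{j}$ rather than to bound individual positions. Suppose some letter $L_l$, of type $s_a$, escapes to infinity. Since $L_l$ always precedes $L_m$ in position, $L_m$ escapes as well, and inductively every later occurrence of $s_a$ does. Hence $\mathbf{j}$ contains $s_a$ only finitely often, so some suffix of $\mathbf{j}$ lies in the proper parabolic subgroup $\tilde W_{\tilde S\setminus\{s_a\}}$, which is finite---contradicting that $\mathbf{j}$ is an infinite reduced word. (Equivalently: take $L$ of minimal original position among letters going to infinity; every surviving letter originally to its right must have swapped past $L$ at some stage, hence commutes with its type, so again that generator appears only finitely often in $\mathbf{j}$.) Either formulation closes the argument without relying on the explicit bound $m-1$.
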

\begin{proof}
Since $\mathbf{i}$ is fully commutative, any braid limit $\mathbf{i}\rightarrow\mathbf{j}$ uses only commutation moves.  Since $\mathbf{i}$ is reduced, and since all parabolic subgroups of $\tilde{W}$ are finite, no single letter of $\mathbf{i}$ can move off to infinity, as it would eventually encounter another letter of the same kind. This implies that $[\mathbf{i}]=[\mathbf{j}]$, since any finite sequence of these moves does not change the inversion set. Therefore, by Proposition~\ref{prop:braid-limit-weak-order}, there does not exist $[\mathbf{j}]$ strictly smaller than $[\mathbf{i}]$ in $\tilde{\mathcal{W}}$. 
\end{proof}

\begin{lemma}\label{lem:fully-commutative-independent-of-translates}
Let $\lambda\in Q^{\vee}$. Then $[t_{\lambda}^\infty]$ is fully commutative if and only if $[t_{w\lambda}^\infty]$ is fully commutative for any $w\in W$.
\end{lemma}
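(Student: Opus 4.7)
The plan is to identify a $W$-invariant condition on $\lambda$ characterizing when $[t_{\lambda}^{\infty}]$ is fully commutative; the lemma then follows immediately.

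Write each real affine root as $\beta=\bar\beta+n\delta$ with $\bar\beta\in\Phi$ and $n\in\mathbb{Z}$ (so that the corresponding reflection hyperplane is $H_{\bar\beta,-n}$). The alcove-walk description of the inversion set (cf.\ Proposition~\ref{prop:explicitword}) yields upon direct computation
\[
\Inv(t_\lambda^\infty)=\{\beta\text{ a positive real affine root}:\langle\lambda,\bar\beta\rangle<0\}.
\]
An infinite reduced word is fully commutative if and only if its inversion set fails to contain the positive part of any finite rank-$2$ sub-root-system of the affine root system of type $A_2$, $B_2$, or $G_2$, this being the standard inversion-set characterization of braid-avoidance. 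Every such affine sub-root-system is of the form $\{\bar\beta+f(\bar\beta)\delta:\bar\beta\in\Phi''\}$ for some rank-$2$ sub-root-system $\Phi''\subseteq\Phi$ of the same type and some integer-valued linear functional $f$ on $\mathrm{span}(\Phi'')$; its positive affine roots project via $\beta\mapsto\bar\beta$ to a positive system of $\Phi''$, and conversely every positive system of $\Phi''$ is realized by some choice of $f$.

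Combining these observations, $[t_\lambda^\infty]$ fails to be fully commutative if and only if some rank-$2$ sub-root-system $\Phi''\subseteq\Phi$ of type $A_2$, $B_2$, or $G_2$ has a positive system contained in $\Phi^-_\lambda:=\{\gamma\in\Phi:\langle\lambda,\gamma\rangle<0\}$. Such a positive system exists exactly when no root of $\Phi''$ is $\lambda$-orthogonal: in that case $\Phi''\cap\Phi^-_\lambda$ itself is a positive system (closed under $\Phi''$-sums by linearity of $\langle\lambda,\cdot\rangle$); otherwise $|\Phi''\cap\Phi^-_\lambda|<|\Phi''|/2$, leaving room for none. We obtain the characterization
\[
[t_\lambda^\infty]\text{ is FC}\iff\text{every rank-}2\text{ sub-root-system of }\Phi\text{ of type }A_2,B_2,\text{ or }G_2\text{ contains a }\lambda\text{-orthogonal root},
\]
which is manifestly invariant under $\lambda\mapsto w\lambda$ for $w\in W$: the finite Weyl group permutes the rank-$2$ sub-root-systems of $\Phi$, and $\langle w\lambda,w\gamma\rangle=\langle\lambda,\gamma\rangle$ preserves orthogonality.

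The main technical point will be the parametrization of finite rank-$2$ affine sub-root-systems via pairs $(\Phi'',f)$ and the bijection between their affine-positive parts and the positive systems of $\Phi''$; everything else is a direct consequence of the explicit inversion set formula together with $W$-equivariance of the classical finite root system.
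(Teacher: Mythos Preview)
Your argument is correct and takes a genuinely different route from the paper's. The paper first treats the special case $\lambda=k_i\omega_i^{\vee}$ by a word-level argument: using the parabolic decomposition $w=w^{J_i}w_{J_i}$ it factors $t_{k_i\omega_i^{\vee}}=(w^{J_i})^{-1}u$ and $t_{wk_i\omega_i^{\vee}}=u(w^{J_i})^{-1}$, so the two infinite powers are cyclic shifts of one another and hence simultaneously fully commutative or not. It then reduces the general case to this one by invoking Corollary~\ref{cor:fully-commutative-is-minimal} and Proposition~\ref{prop:minimal}: if $[t_\lambda^{\infty}]$ is fully commutative it is minimal, so $\lambda$ already lies in the $W$-orbit of some multiple of a fundamental coweight.

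Your approach bypasses both the minimality classification and the rotation argument by extracting a manifestly $W$-invariant criterion, namely that every irreducible rank-$2$ sub-root-system of $\Phi$ meets $\lambda^{\perp}$, directly from the explicit inversion set $\{\beta:\langle\lambda,\bar\beta\rangle<0\}$. This is cleaner and in fact yields more: an intrinsic test for full commutativity of $[t_\lambda^{\infty}]$ for arbitrary $\lambda$, which anticipates part of the mechanism in the proof of Theorem~\ref{thm:fully-commutative-is-minuscule}. The trade-off is that you invoke the inversion-set characterization of full commutativity as a black box; since the paper itself relies on the same tool (via Billey--Postnikov) in Theorem~\ref{thm:fully-commutative-is-minuscule}, this is entirely acceptable, though in a standalone write-up you would want to supply a reference for the affine case.
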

\begin{proof}
We first investigate relations between explicit words of $t_{k_i\omega_i^{\vee}}^\infty$ and $t_{wk_i\omega_i^{\vee}}^\infty$. Recall that $J_i=\{s_j \in S\:|\: j\neq i\}$ and let $w=w^{J_i}w_{J_i}$ be the parabolic decomposition. Since $w_{J_i}$ fixes $\omega_i^{\vee}$, we have $wk_i\omega_i^{\vee}=w^{J_i}k_i\omega_i^{\vee}$. The inversion set $\Inv(w^{J_i})$ is contained in 
\[
\Inv(w_0^{J_i})=\{\alpha\in\Phi^+\:|\: \alpha_i\leq\alpha\},
\]
the set of all positive roots of the finite Weyl group $W$ supported on $\alpha_i$. Thus the hyperplane inversions of $w^{J_i}$ are all crossed if we move in the direction of $\omega_i^{\vee}$. Thus, we have $\Inv(w^{J_i})\subset\Inv(t_{-k_i\omega_i^{\vee}})$. As $t_{k_i\omega_i^{\vee}}=t_{-k_i\omega_i^{\vee}}^{-1}$, we can then recognize $(w^{J_i})^{-1}$ as a prefix for $t_{k_i\omega_i^{\vee}}$ and write $u=w^{J_i}t_{k_i\omega_i^{\vee}}$. In this way, we can choose reduced words for $(w^{J_i})^{-1}$ and $u$ so that $t_{k_i\omega_i^{\vee}}=(w^{J_i})^{-1}u$ and $t_{wk_i\omega_i^{\vee}}=t_{w^{J_i}k_i\omega_i^{\vee}}=w^{J_i}t_{k_i\omega_i^{\vee}}(w^{J_i})^{-1}=u(w^{J_i})^{-1}$. Therefore, both $t_{k_i\omega_i^{\vee}}^\infty$ and $t_{wk_i\omega_i^{\vee}}^\infty$ are consecutive subwords of each other. They must be both fully commutative or not fully commutative at the same time.

For the purpose of this lemma, we can without loss of generality assume that $[t_{\lambda}^\infty]$ is fully commutative. By Corollary~\ref{cor:fully-commutative-is-minimal} and Proposition~\ref{prop:minimal}, we have that $\lambda=u\omega_i^{\vee}$ for some $u\in W$ and fundamental coweight $\omega_i^{\vee}$. By our argument above, $[t_{w\lambda}^\infty]=[t_{wu\omega_i^{\vee}}^\infty]$ is fully commutative as well.
\end{proof}

Building up from Corollary~\ref{cor:fully-commutative-is-minimal}, Proposition~\ref{prop:minimal} and Lemma~\ref{lem:fully-commutative-independent-of-translates}, we see that an infinite fully commutative reduced word $[\mathbf{i}]$ must be $[t_{wk_i\omega_i^{\vee}}^\infty]$ for some $w\in W$ and some particular fundamental coweight $\omega_i^{\vee}$.
\begin{defin}\label{def:fully-commutative-node}
We say that a node $\alpha_i$ of the Dynkin diagram of $W$ is \textit{fully commutative} if $[t_{k_i\omega_i^{\vee}}^\infty]$ (or equivalently, $[t_{wk_i\omega_i^{\vee}}^\infty]$ for any $w\in W$) is fully commutative.
\end{defin}

A weight $\lambda \in P$ is \emph{minuscule} if all weights in the associated irreducible representation of the corresponding simple Lie algebra lie in the $W$-orbit of $\lambda$, and \emph{cominuscule} if $\lambda^{\vee}$ is a minuscule weight for the dual root system.  The classification of minuscule weights is well known (see, e.g. \cite{Bourbaki}). We say that a node of the Dynkin diagram is \textit{minuscule} (resp. \emph{cominuscule}) if the corresponding fundamental weight is minuscule (resp. cominuscule).

The following is our main result of the section, completely answering Problem~\ref{prob:are-minimal-fully-commutative}.

\begin{theorem}\label{thm:fully-commutative-is-minuscule}
Let $\widetilde{W}$ be any affine Weyl group, then a node is fully commutative if and only if it is minuscule or cominuscule.
\end{theorem}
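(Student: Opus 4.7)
The plan is to prove the two implications of the equivalence separately, after exploiting the reductions already in place. By Corollary~\ref{cor:fully-commutative-is-minimal}, Proposition~\ref{prop:minimal}, and Lemma~\ref{lem:fully-commutative-independent-of-translates}, the statement reduces to deciding, for each fundamental coweight $\omega_i^\vee$, whether $[t_{k_i\omega_i^\vee}^\infty]$ is fully commutative; in particular only the translation along each representative of a $W$-orbit need be analyzed.

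For the direction ``(co)minuscule $\Rightarrow$ fully commutative'', I would exploit the standard characterizations of these coweights: $\omega_i$ is cominuscule iff $\langle\alpha,\omega_i^\vee\rangle\in\{0,\pm 1\}$ for every root $\alpha$ (equivalently, the coefficient of $\alpha_i$ in every positive root is at most one), and $\omega_i$ is minuscule iff the analogous condition holds in the dual root system. A braid move $s_as_bs_a\leftrightarrow s_bs_as_b$ with $m_{ab}\geq 3$ applied to some reduced form of a finite prefix corresponds, via the alcove-walk/hyperplane-inversion dictionary, to three consecutive crossings (after commutations) of affine hyperplanes meeting at a common codimension-$2$ face and arising from roots $\alpha,\beta,s_\alpha\beta$ of a rank-$2$ finite sub-root system. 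In the cominuscule case, the additivity $\langle\alpha+\beta,\omega_i^\vee\rangle=\langle\alpha,\omega_i^\vee\rangle+\langle\beta,\omega_i^\vee\rangle$ combined with the $\{0,\pm 1\}$ restriction forces at least one of the three values to vanish, so the corresponding hyperplane is not crossed by the straight-line translation in direction $\omega_i^\vee$ and the triple cannot be assembled. In the minuscule case (which is distinct from cominuscule only for $\omega_n$ in type $B_n$ and $\omega_1$ in type $C_n$), I would run the same argument in the dual arrangement, relying on the identification of Coxeter presentations of $\tilde{W}$ with that of its Langlands dual to transport the conclusion back.

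For the reverse implication I would argue the contrapositive. If $\omega_i$ is neither minuscule nor cominuscule, there is both a root $\alpha$ with $|\langle\alpha,\omega_i^\vee\rangle|\geq 2$ and a coroot witness with coefficient at $\alpha_i^\vee$ of at least $2$. Tracing the algorithm of Section~\ref{sec:explicit-words} applied to $\lambda=k_i\omega_i^\vee$, I would locate, in an appropriate finite prefix of the word in Table~\ref{tab:all-minimal}, two hyperplane crossings from a single $W$-orbit squeezed around a crossing of a transverse hyperplane meeting them in codimension two; after commuting the intervening letters this rearranges into an explicit $s_as_bs_a$ subword with $m_{ab}\geq 3$, defeating full commutativity.

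The main obstacle I anticipate is rigorously justifying the ``braid-move = meeting triple of hyperplanes'' dictionary and ensuring its conclusions are preserved under arbitrary commutation rearrangements of the periodic infinite word, not merely in the canonical form produced by the algorithm. The minuscule-but-not-cominuscule case is the second delicate point, since $\tilde{W}$ and its dual are not isomorphic outside simply-laced types, so the correspondence between their translation lattices and alcove walks must be set up carefully. If a fully uniform treatment proves recalcitrant, a workable fallback is to establish the forward direction uniformly in the cominuscule case via the additivity argument above and to handle the two exceptional minuscule-only nodes (in types $B_n$ and $C_n$) with a short direct verification using the words in Table~\ref{tab:all-minimal}.
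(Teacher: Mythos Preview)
Your rank-two hyperplane argument for the cominuscule direction is essentially the same mechanism the paper uses, but the paper organizes both implications much more efficiently by inserting an intermediate equivalence: $\alpha_i$ is a fully commutative node if and only if the longest element $w_0^{J_i}\in W^{J_i}$ is fully commutative. One direction is immediate because $(w_0^{J_i})^{-1}\le_R t_{k_i\omega_i^\vee}$ (this is extracted from the proof of Lemma~\ref{lem:fully-commutative-independent-of-translates}); for the other, the paper shows---as you outline---that a braid move in a reduced word for $t_{k_i\omega_i^\vee}^\infty$ forces the $m_{ab}$ associated hyperplane labels $\beta_1,\dots,\beta_{m_{ab}}$ to be precisely the positive roots of an irreducible rank-two subsystem $\Phi'\subset\Phi$, all of which lie in $\Inv(w_0^{J_i})=\{\alpha\in\Phi^+:\alpha_i\in\supp(\alpha)\}$, and then invokes Billey--Postnikov's criterion to conclude that $w_0^{J_i}$ is not fully commutative. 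The equivalence ``$w_0^{J_i}$ fully commutative $\Leftrightarrow$ $\alpha_i$ minuscule or cominuscule'' is then quoted from Proctor and Stembridge. This reduction handles the minuscule-only nodes and the converse direction simultaneously and uniformly, with no duality and no case analysis.

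Your route, by contrast, has two real gaps. First, the Langlands-dual transport for the minuscule-only nodes does not go through as stated: $\widetilde{B_n}$ and $\widetilde{C_n}$ are not isomorphic even as abstract Coxeter groups (their affine Coxeter diagrams differ as graphs), so there is no identification of infinite reduced words to carry the cominuscule conclusion across; your fallback of direct verification for $\omega_n^\vee$ in $B_n$ and $\omega_1^\vee$ in $C_n$ works but is exactly the ad hoc check you were hoping to avoid. Second, your contrapositive for ``not (co)minuscule $\Rightarrow$ not fully commutative'' is too vague to be a proof: the existence of a root with $\alpha_i$-coefficient $\ge 2$ does not by itself produce three hyperplanes that can be \emph{commuted to consecutive positions} in some reduced expression. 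Closing this requires precisely the Billey--Postnikov statement (an irreducible rank-two subsystem entirely inside the inversion set forces a braid), at which point you have rebuilt the paper's argument anyway---so it is cleaner to pass through $w_0^{J_i}$ and quote the finite-type classification.
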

\begin{proof}
We first show that a node $\alpha_i$ is fully commutative if and only if the element $w_0^{J_i} \in W$ is fully commutative.

Suppose that $\alpha_i$ is fully commutative, then $t_{k_i\omega_i^{\vee}}$ is fully commutative by definition. By the proof of Lemma~\ref{lem:fully-commutative-independent-of-translates}, $(w_0^{J_i})^{-1} \leq_R t_{k_i\omega_i^{\vee}}$, so $w_0^{J_i}$ must be fully commutative as well: if any braid move could be applied in a reduced word for $w_0^{J_i}$, then the same move could be applied in the reversed reduced word for $(w_0^{J_i})^{-1}$, but this is impossible since this reduced word is a prefix of a reduced word for the fully commutative element $t_{k_i\omega_i^{\vee}}$.

For the converse, suppose that $w_0^{J_i}$ is fully commutative. The sets of hyperplane inversions of $w_0^{J_i}$ and $t_{k_i\omega_i^{\vee}}^\infty$ are 
\begin{align*}
    I &= \{H_{\alpha,0} \: | \: \alpha_i \in \supp(\alpha)\}, \\
    I' &= \{H_{\alpha,k} \: | \: \alpha_i \in \supp(\alpha), k \geq 1\},
\end{align*}
respectively. In particular, the roots $\alpha$ labelling inversion hyperplanes for the two elements are the same. Now suppose that $t_{k_i\omega_i^{\vee}}^\infty$ is not fully commutative, so there is a reduced word and a finite prefix of the form
\[
v=w \underbrace{s_is_js_i \cdots}_{m_{ij}}.
\]
for $w \in \tilde{W}$ and $m_{ij} \geq 3$. 

Now, let $H_{\beta_1,b_1}, \ldots, H_{\beta_{m}, b_m}$ be the $m=m_{ij}$ hyperplanes which are hyperplane inversions of $v$ but not of $w$. The set $\{\beta_1, \ldots , \beta_m\} \subseteq \Phi^+$ may be computed in terms of the finite Weyl group $W$:
\[
\{\beta_1, \ldots, \beta_m\} = \{w'\alpha_i, w's_i'\alpha_j, w's_i's_j'\alpha_i, \ldots\}
\]
where for $u \in \tilde{W}$ we denote by $u'$ the image of $u$ under the projection $\tilde{W} \to W$ (so $s_i'=s_i$ for $i \neq 0$ and $s_0'$ is the non-simple reflection $s_{\xi}$ with respect to the highest root). An elementary calculation shows that the roots $\{\beta_1,\ldots, \beta_m\}$ span a two-dimensional subspace $V'$ of $V$ such that the root system $\Phi'=\Phi \cap V'$ is isomorphic to the irreducible root system of type $A_2, B_2,$ or $G_2$ according to whether $m_{ij}=3,4,$ or $6$ and that $\{\beta_1,\ldots, \beta_m\}$ are exactly the roots $\Phi^+ \cap V'$ in each case. 

Since the hyperplane inversions of $v$ are a subset of those of $t_{k_i \omega_i^{\vee}}^{\infty}$ and since the roots labelling hyperplanes in $I'$ are exactly the same as those labelling hyperplanes in $I$, we see that the inversion set of $w_0^{J_i} \in W$ also contains all of the positive roots of such a rank two subsystem. But by Proposition 4.1 of Billey--Postnikov \cite{Billey-Postnikov} this contradicts the assumption that $w_0^{J_i}$ was fully commutative, thus $t_{k_i \omega_i^{\vee}}^{\infty}$ is fully commutative.

Results of Proctor \cite{Proctor} and Theorem 6.1 of Stembridge \cite{stembridge-fullycommutative} imply that $w_0^{J_i} \in W$ is fully commutative if and only if $\alpha_i$ is minuscule or cominuscule, so the theorem is proven.
\end{proof}
 
\section{Densities and fully commutative infinite reduced words}
\label{sec:densities}
In this section, we work directly with the reduced words to give an alternative proof of the classification (Corollary~\ref{cor:fully-commutative-is-minimal} and Theorem~\ref{thm:fully-commutative-is-minuscule}) of infinite fully commutative reduced words. Throughout, let $W$ be a finite Weyl group that is not of type $A$. Therefore, its affinization $\widetilde{W}$ has an acyclic Dynkin diagram, which is crucial to our analysis.

Let $v$ be a node of the Dynkin diagram of $\widetilde{W}$ such that it connects to its neighbors by a single edge, i.e. $(s_vs_j)^3=\mathrm{id}$ if $j$ is a neighbor of $v$ and it connects to each of the connected components of $\widetilde{S}\setminus\{v\}$ in one of the following three ways described in Figure~\ref{fig:fully-commutative-branch}. We call them a type $A_m$ branch, a type $B_m$ branch and a type $D_m$ branch respectively, we say that $v$ is a \textit{branch node}.
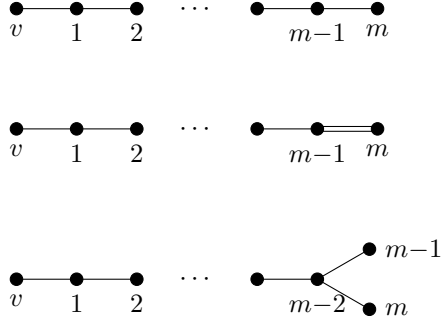
\begin{figure}[ht!]
\centering
\begin{tikzpicture}[scale=0.8]
\draw(0,0)--(2,0);
\draw(4,0)--(6,0);
\node[draw,shape=circle,fill=black,scale=0.5][label=below: {$v$}] at (0,0) {};
\node[draw,shape=circle,fill=black,scale=0.5][label=below: {$1$}] at (1,0) {};
\node[draw,shape=circle,fill=black,scale=0.5][label=below: {$2$}] at (2,0) {};
\node[draw,shape=circle,fill=black,scale=0.5][label=below: {}] at (4,0) {};
\node[draw,shape=circle,fill=black,scale=0.5][label=below: {$m{-}1$}] at (5,0) {};
\node[draw,shape=circle,fill=black,scale=0.5][label=below: {$m$}] at (6,0) {};
\node at (3,0) {$\cdots$};

\draw(0,-2)--(2,-2);
\draw(4,-2)--(5,-2);
\draw(5,-2.04)--(6,-2.04);
\draw(5,-1.96)--(6,-1.96);
\node[draw,shape=circle,fill=black,scale=0.5][label=below: {$v$}] at (0,-2) {};
\node[draw,shape=circle,fill=black,scale=0.5][label=below: {$1$}] at (1,-2) {};
\node[draw,shape=circle,fill=black,scale=0.5][label=below: {$2$}] at (2,-2) {};
\node[draw,shape=circle,fill=black,scale=0.5][label=below: {}] at (4,-2) {};
\node[draw,shape=circle,fill=black,scale=0.5][label=below: {$m{-}1$}] at (5,-2) {};
\node[draw,shape=circle,fill=black,scale=0.5][label=below: {$m$}] at (6,-2) {};
\node at (3,-2) {$\cdots$};

\draw(0,-4.5)--(2,-4.5);
\draw(4,-4.5)--(5,-4.5);
\draw(5.866,-4)--(5,-4.5)--(5.866,-5);
\node[draw,shape=circle,fill=black,scale=0.5][label=below: {$v$}] at (0,-4.5) {};
\node[draw,shape=circle,fill=black,scale=0.5][label=below: {$1$}] at (1,-4.5) {};
\node[draw,shape=circle,fill=black,scale=0.5][label=below: {$2$}] at (2,-4.5) {};
\node[draw,shape=circle,fill=black,scale=0.5][label=below: {}] at (4,-4.5) {};
\node[draw,shape=circle,fill=black,scale=0.5][label=below: {$m{-}2$}] at (5,-4.5) {};
\node[draw,shape=circle,fill=black,scale=0.5][label=right: {$m{-}1$}] at (5.866,-4) {};
\node[draw,shape=circle,fill=black,scale=0.5][label=right: {$m$}] at (5.866,-5) {};
\node at (3,-4.5) {$\cdots$};
\end{tikzpicture}
\caption{Type $A$ branch, type $B$ branch, and type $D$ branch connected to $v$.}
\label{fig:fully-commutative-branch}
\end{figure}

Notice that such a branch node $v$ does not exist for types $\widetilde{B_3}$, $\widetilde{C_3}$, $\widetilde{F_4}$ and $\widetilde{G_2}$. We will not be concerned about these types for the section since these small cases are easy to check by hand. 

Let $J_1,\ldots,J_b$ be the connected components of $\widetilde{S}\setminus\{v\}$. Let $[\mathbf{i}]$ be a class of fully commutative infinite reduced words. By identifying the simple generators $s_v$ in $\mathbf{i}$ and separating the infinite words correspondingly into blocks, we can write $$\mathbf{i}=(w_{J_1}^{(0)}w_{J_2}^{(0)}\cdots w_{J_b}^{(0)})s_v(w_{J_1}^{(1)}\cdots w_{J_b}^{(1)})s_v\cdots s_v(w_{J_1}^{(p)}\cdots w_{J_b}^{(p)})s_v\cdots$$
where $w_{J_k}^{(p)}$ is in the parabolic subgroup generated by $J_k$. Notice that two generators in different connected components of $\widetilde{S}\setminus\{v\}$ commute. So $w_{J}^{(p)}$ commutes with $w_{J'}^{(p')}$ for $J\neq J'$.

Since commutation moves are allowed, each $w_{J_k}^{(p)}$ is not well-defined for $[\mathbf{i}]$ and is only well-defined for a particular reduced word. However, we can still make the following definitions. For each $J\in\{J_1,\ldots,J_b\}$ and $p\geq0$, we define
\[
d([\mathbf{i}],J)_p=
\begin{cases}
0,&\text{ if commutation moves can be applied so that } w^{(p)}_J=\mathrm{id}\\
2,&\text{ if commutation moves can be applied so that } w^{(p)}_{J'}=\mathrm{id}\text{ for all }J'\neq J\\
1,&\text{ otherwise}
\end{cases}.
\]
We see that $d([\mathbf{i}],J)_p$ is well-defined because the first two situations above cannot happen simultaneously, which would mean that certain commutations moves can be applied so that $W_J^{(p)}=\mathrm{id}$ for all $J$ and we cannot have a reduced word in this case. 

The parameter $d([\mathbf{i}],J)$, which is an infinite vector, can be intuitively thought of as indicating the ``density" of the branch $J$ in the reduced word $[\mathbf{i}]$. 
\begin{lemma}\label{lem:density-at-least-2}
For every $p\geq1$, $d([\mathbf{i}],J_1)_p+\cdots+d([\mathbf{i}],J_b)_p\geq2$.
\end{lemma}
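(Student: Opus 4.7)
The plan is to argue by contradiction: suppose for some $p\geq 1$ we have $d([\mathbf{i}],J_1)_p+\cdots+d([\mathbf{i}],J_b)_p\leq 1$. Since each $d\in\{0,1,2\}$, this forces one of two configurations: either every $d([\mathbf{i}],J_k)_p=0$, or exactly one $d([\mathbf{i}],J_{k_0})_p=1$ while $d([\mathbf{i}],J_k)_p=0$ for all $k\neq k_0$.

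The crucial tool I will need is a \emph{simultaneous reduction} principle: if for some collection $K\subseteq\{J_1,\ldots,J_b\}$ each $w_J^{(p)}$ with $J\in K$ can individually be commuted to the identity, then a single sequence of commutation moves can be applied to $\mathbf{i}$ so that $w_J^{(p)}=\mathrm{id}$ simultaneously for all $J\in K$. This is essentially the observation already implicit in the text's argument for the well-definedness of $d$. My plan is to justify it by reducing one component at a time: the moves that reduce $w_{J_{k_1}}^{(p)}$ consist only of swaps among $J_{k_1}$-letters, $s_v$'s, and $J_{k'}$-letters with $k'\neq k_1$ (the last type being free commutations between different connected components). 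During this reduction the sub-word $w_{J_{k'}}^{(p)}$ is preserved as a word, since its letters can only shuffle within block $p$ and cannot cross a boundary $s_v$ (only the actively moved $J_{k_1}$-letters do that). Since $d([\mathbf{i}],J_{k'})_p$ depends only on the class $[\mathbf{i}]$, which is unchanged, we can then iterate to reduce $w_{J_{k_2}}^{(p)},w_{J_{k_3}}^{(p)},\ldots$ in turn.

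With this principle in hand the two cases collapse immediately. In the first case (all $d=0$), simultaneous reduction applied to every component produces a word in $[\mathbf{i}]$ with the $p$-th block empty; since $p\geq 1$ there are copies of $s_v$ on both sides of block $p$, so this word contains $s_vs_v$ as a consecutive subword, contradicting its being reduced. In the second case, simultaneous reduction applied to the components $\{J_k:k\neq k_0\}$ yields a word in $[\mathbf{i}]$ with $w_{J_k}^{(p)}=\mathrm{id}$ for every $k\neq k_0$, which is precisely the defining condition for $d([\mathbf{i}],J_{k_0})_p=2$, contradicting our assumption $d([\mathbf{i}],J_{k_0})_p=1$.

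The main obstacle I anticipate is making the simultaneous reduction principle rigorous, specifically tracking that when one commutes a $J_{k_1}$-letter past the boundary $s_v$ of block $p$ (which physically displaces the $s_v$ by one position) no $J_{k'}$-letter is inadvertently carried across the boundary, so that the sub-words $w_{J_{k'}}^{(p)}$ are genuinely preserved. Once this bookkeeping is carried out, the contradictions in both cases are immediate.
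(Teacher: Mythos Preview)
Your approach is correct and is essentially the contrapositive of the paper's own argument: the paper also observes that one can never commute all blocks to the identity (else two consecutive $s_v$'s appear), and that if all but one can be made trivial then the remaining one has $d=2$ by definition. You have simply recast this dichotomy as a case analysis on the sum being $\leq 1$.

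The one place you go beyond the paper is in isolating the \emph{simultaneous reduction} principle, which the paper uses implicitly both here and in the well-definedness argument for $d$ just before the lemma. Your plan to justify it is sound but slightly over-complicated: the cleanest route is to note that for each $k$, whether a given $J_k$-letter lies in block $p$ depends only on its position relative to the $s_v$'s in the heap (commutation class), and since $J_k$-letters commute freely with all $J_{k'}$-letters for $k'\neq k$, the sequence of moves emptying block $p$ of $J_{k_1}$-letters can be chosen to move only $J_{k_1}$-letters past $s_v$'s, leaving every other $w_{J_{k'}}^{(p)}$ literally unchanged as a word. This avoids the bookkeeping worry you flag about $s_v$'s ``shifting position''---the point is that you never need to swap a $J_{k'}$-letter with an $s_v$ at all. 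With that in hand, both of your cases conclude exactly as you say.
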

\begin{proof}
This is immediate from the definition of $d([\mathbf{i},J])$. Notice that we can never apply commutation moves so that all of $w_J^{(p)}$'s become the identity, since that would imply $[\mathbf{i}]$ is not reduced with two consecutive $s_v$'s. If commutation moves can be applied so that only one of $w_J^{(p)}$'s is not the identity, then $d([\mathbf{i}],J)=2$ and if that cannot happen, we must have $d([\mathbf{i}],J)=1$ and $d([\mathbf{i}],J')=1$ for some $J\neq J'$. So we are done.
\end{proof}
Lemma~\ref{lem:density-at-least-2} is saying that we need a total ``density" of at least 2. We then show that this density is in fact very nontrivial to achieve for different types of branches.

\begin{lemma}\label{lem:density-typeA}
Let $J$ be a type $A_m$ branch labeled as in Figure~\ref{fig:fully-commutative-branch} and $[\mathbf{i}]$ be fully commutative. With notations as above, we then have $d([\mathbf{i}],J)_p\leq1$ for all $p$. Moreover, if $d([\mathbf{i}],J)_{p+1}=\cdots=d([\mathbf{i}],J)_{p+m}=1$, then commutation moves can be applied so that $w_J^{(p+k)}=s_ks_{k-1}\cdots s_1$ for $k=1,\ldots,m$ and $d([\mathbf{i}],J)_{p+m+1}=0.$
\end{lemma}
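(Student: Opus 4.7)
I assume for contradiction that $d([\mathbf{i}],J)_p=2$, so by definition there is a reduced word in $[\mathbf{i}]$ in which block $p$ contains no letters from any other branch, reducing the block to $s_v\cdot w\cdot s_v$ for some $w\in W_J$. Since $[\mathbf{i}]$ is fully commutative, $w$ is a fully commutative element of $W_J$ (which has type $A_m$). I first establish the auxiliary fact that any such $w$ contains $s_1$ at most once: two occurrences would be separated by some $B\in\langle s_2,\ldots,s_m\rangle$, which must contain $s_2$ (else $s_1 B s_1$ is not reduced); then using that $s_1$ commutes with $s_3,\ldots,s_m$, one can rearrange the nearest $s_2$ next to one of the $s_1$'s to produce a consecutive $s_1 s_2 s_1$ triple, contradicting FC. Now if $w$ contains no $s_1$ at all, then $w$ commutes with $s_v$ and $s_v w s_v$ collapses, contradicting reducedness; and if $w=A s_1 B$ with $A,B\in\langle s_2,\ldots,s_m\rangle$, then commuting $A$ past the left $s_v$ and $B$ past the right yields $A\cdot s_v s_1 s_v\cdot B$, containing the forbidden braid.

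\textbf{Part (2).} The key structural point is that $s_1$ does not commute with $s_v$, while $s_j$ does for $j\geq 2$. Consequently the count of $s_1$ in each block is a commutation-invariant (equal to $1$ for each of $p+1,\ldots,p+m$ by hypothesis), while $s_j$'s for $j\geq 2$ can be transported freely across block boundaries via commutations with $s_v$. I plan to induct on the length $\ell$ of a run of consecutive density-$1$ blocks, simultaneously achieving the prescribed staircase form. For the base case $\ell=1$, I iteratively push the leftmost and rightmost letters of $w_J^{(p+1)}\setminus\{s_1\}$ across $v_p$ or $v_{p+1}$ (allowed since they lie in $\langle s_2,\ldots,s_m\rangle$), leaving $w_J^{(p+1)}=s_1$. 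For the inductive step, assuming $w_J^{(p+i)}=s_i\cdots s_1$ has been realized for $i<\ell$, I analyze the heap of $J$-letters and $s_v$'s; a key observation is that to avoid the braid $s_j s_{j+1} s_j$ propagating across consecutive blocks (built from the $s_j$'s already placed by induction), each block $p+k$ with $k\geq 2$ must contain $s_2,\ldots,s_k$ as blockers, and these letters can then be commuted into the descending order $s_\ell\cdots s_1$ within block $p+\ell$.

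The claim $d([\mathbf{i}],J)_{p+m+1}=0$ follows by contradiction: by part (1) this density is at most $1$, and if it equalled $1$ then $m+1$ consecutive density-$1$ blocks would exist, forcing (by the inductive achievability above) block $p+(m+1)$ to contain a generator $s_{m+1}\in J$, which is absent in a type-$A_m$ branch. The main obstacle is the inductive step above: rigorously showing that the precise staircase is simultaneously forced and achievable. This requires careful heap-theoretic bookkeeping to track which $J$-letters can be placed in which block, using the rules that $s_j$ commutes with $s_v$ iff $j\geq 2$, and $s_j$ commutes with $s_{j'}$ iff $|j-j'|\geq 2$. The cap imposed by the type-$A_m$ Dynkin (namely the absence of $s_{m+1}$) is what terminates the staircase at length $m$ and yields the density drop.
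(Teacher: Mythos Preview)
Your proposal is correct and follows essentially the same route as the paper. Part~(1) is a minor variant: the paper normalizes $w_J$ to the form $s_1s_2\cdots s_k u$ and then produces a braid, whereas you first isolate the single $s_1$ and sandwich it between the two $s_v$'s; both yield $s_vs_1s_v$. For Part~(2), the paper carries out exactly the induction you outline, and the ``main obstacle'' you flag is handled there by a short case analysis: after pushing generators rightward one shows $w_J^{(p+k+1)}$ must equal $s_a\cdots s_1$ for some $a$, and then $a\le k$ forces a braid $s_as_{a\pm1}s_a$ against the already-placed staircase, $a\ge k+2$ lets $s_a$ slide all the way into block $p$, leaving $a=k+1$ as the only possibility---this is precisely the heap bookkeeping you anticipate, done at the level of words.
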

\begin{proof}
To show that $d([\mathbf{i}],J)_p\neq2$, it suffices to show that there are no fully commutative elements of the form $s_vw_Js_v$ where $w_J\in W_J$. Assume the opposite and pick a reduced word for $w_J=s_{i_1}\cdots s_{i_\ell}$. If the word does not start with $s_{i_1}=s_1$, then we can use commutation move to move $s_{i_1}$ to the left of the first $s_v$ and argue with $s_vs_{i_2}\cdots s_{i_\ell}s_v$. So we can similarly assume without loss of generality that $w_J=s_1s_2\cdots s_ku$ where $u$ is either the identity or starts with some $s_a$ with $a\leq k-1$. But if $u$ starts with $s_a$ with $a\leq k-1$, we can move $s_a$ to the left to obtain $s_as_{a+1}s_a$ and similarly if $u=\mathrm{id}$, we can move the $s_v$ at the end to obtain $s_vs_1s_v$. This contradicts $s_vw_Js_v$ being fully commutative.

Next, we assume that $d([\mathbf{i}],J)_{p+1}=\cdots=d([\mathbf{i}],J)_{p+m}=1$ and investigate what can $w_J^{(p+1)},\ldots,w_J^{(p+m)}$ be. We use induction to show that we can apply commutation moves so that $w_J^{(p+k)}$ becomes $s_k\cdots s_1$ for $k=1,\ldots,m$. The base case is $k=1$. If $w_J^{(p+1)}=s_{i_1}\cdots s_{i_\ell}$ does not start with $s_{i_1}=s_1$, we can use commutation moves to move $s_{i_1}$ across the $s_v$ on the left. Thus, assume $s_{i_1}=s_1$. Similarly we can assume that $s_{i_2}=s_2,\ldots,s_{i_k}=s_k$ for some $k\geq1$ and $i_{k+1}<k+1$. Then we create a three term Coxeter move $s_as_{a+1}s_a$ if we move $i_{k+1}$ to the left. As a result, $w_J^{(p+1)}$ has now become $s_1s_2\cdots s_k$ for some $k\geq1$, we can then use commutation moves to move $s_2\cdots s_k$ across the $s_v$ on the right. In this way, $w_J^{(p+1)}$ becomes $s_1$ as desired.

Now assume the induction hypothesis that $w_J^{(p+a)}=s_a\cdots s_1$ for $a\leq k$ and consider $w_J^{(p+k+1)}$. By the same reasoning of moving simple generators across the $s_v$ on the right, we can assume that $w_J^{(p+k+1)}$ end with $s_a s_{a-1}\cdots s_2s_1$. In fact, we can assume that $w_J^{(p+k+1)}$ has become $s_a\cdots s_1$ since any additional $s_j$ immediately to the left of $s_a$ will either be moved across the $s_v$ on the right if $j\geq a+2$, or create a 3-term Coxeter move $s_js_{j+1}s_j$ if $j\leq a-1$ (and if $j=a+1$ then this sequence just gets extended and clearly $j\neq 1$). If $a=1$, then we have a Coxeter move $s_vs_1s_v$ available and if $2\leq a\leq k$, we can move $s_a$ across the $s_v$ into $w_J^{(p+k)}=s_k\cdots s_1$ to create a Coxeter move $s_as_{a-1}s_1$. And if $a\geq k+2$, we can move $s_a$ all the way to the left into $w_J^{(p)}$ so that it's out of consideration. As a result, we must have $a=k+1$ and we have used commutation moves to let $w_J^{(p+k+1)}$ become $s_{k+1}\cdots s_1$ as desired. The induction step goes through.

Finally, when $d([\mathbf{i}],J)_{p+1}=\cdots=d([\mathbf{i}],J)_{p+m}=1$, we have seen that commutation moves so that $w_J^{(p+k)}=s_ks_{k-1}\cdots s_1$ for $k=1,\ldots,m$. Now for $w_J^{(p+m+1)}$, if it is not the identity, with the same reasoning as above, we assume it is $s_a\cdots s_1$, for some $a$. But we either get a Coxeter move $s_vs_1s_1$ or $s_as_{a-1}s_a$ by moving $s_a$ into $w_J^{(p+m)}$. Therefore, $d([\mathbf{i}],J)=0$.
\end{proof}

Intuitively, Lemma~\ref{lem:density-typeA} is saying that a branch of type $A_m$ has a density at most $m/(m+1)$. We continue such analysis for type $B_m$ branches and type $D_m$ branches. 
\begin{lemma}\label{lem:density-typeB}
Let $J$ be a type $B_m$ branch labeled as in Figure~\ref{fig:fully-commutative-branch} and $[\mathbf{i}]$ be fully commutative. If $d([\mathbf{i}],J)_p=2$, then $w_J^{(p)}=s_1s_2\cdots s_m\cdots s_1$, and $w_J^{(p-1)}=w_J^{(p+1)}=\mathrm{id}$. On the other hand, if the sequence $d([\mathbf{i}],J)$ is eventually 1, then commutation moves can be applied so that $w_J^{(p)}=s_ms_{m-1}\cdots s_2s_1$ for all sufficiently large $p$. 
\end{lemma}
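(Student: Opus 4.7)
The plan is to mirror the combinatorial strategy of Lemma~\ref{lem:density-typeA}, now incorporating the 4-braid relation $(s_{m-1}s_m)^4=\mathrm{id}$ at the far end of the $B_m$ branch.

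For Claim 1, assume $d([\mathbf{i}],J)_p=2$, so after commutation moves a contiguous factor $s_v w_J^{(p)} s_v$ appears in $\mathbf{i}$ with $w_J^{(p)} \in W_J$; full commutativity of $\mathbf{i}$ forces this factor to be fully commutative. I would first verify by direct inspection that the candidate $s_v(s_1 s_2 \cdots s_m s_{m-1} \cdots s_1)s_v$ is itself fully commutative: every non-boundary letter is sandwiched between two Dynkin-adjacent letters (so no commutation move can be applied), and no three or four consecutive letters form a braid pattern. For uniqueness, I would argue as in Lemma~\ref{lem:density-typeA}: by commuting away any initial letter other than $s_1$ past the left $s_v$, WLOG $w_J^{(p)}$ begins with a maximal ascending run $s_1 s_2 \cdots s_k$. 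If $k<m$, analyzing the next letter forces either the 3-braid $s_i s_{i+1} s_i$ for some $i \leq k-1$, or (when the run terminates) the 3-braid $s_v s_1 s_v$ after commuting the right $s_v$ past $s_k,\ldots,s_2$; hence $k=m$. A symmetric analysis from the right forces the subsequent descending run $s_{m-1} s_{m-2} \cdots s_1$.

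To show $w_J^{(p\pm1)}=\mathrm{id}$, I would argue by contradiction: suppose $w_J^{(p+1)} \neq \mathrm{id}$, so after commutations it begins with some $s_a$. Because letters from other branches commute with every letter of $J\cup\{v\}$, I slide $s_a$ leftward past the other-branch content of block $p+1$. If $a=1$, this yields the 3-braid $s_1 s_v s_1$ (with the $s_v$ at position $p+1$). If $2 \leq a \leq m-1$, further sliding past $s_v^{(p+1)}$ and then through $s_1,\ldots,s_{a-2}$ in the descending tail of $w_J^{(p)}$ produces the 3-braid $s_{a-1} s_a s_{a-1}$. For $a=m$, the analogous slide brings $s_m$ next to the unique $s_m$ of $w_J^{(p)}$, surrounded by two copies of $s_{m-1}$, giving the 4-braid $s_{m-1} s_m s_{m-1} s_m$. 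The case $p-1$ is symmetric.

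For Claim 2, suppose $d([\mathbf{i}],J)_p=1$ for all $p \geq P_0$. By Claim 1, $d=2$ cannot occur for $p>P_0$, since it would force $w_J=\mathrm{id}$ at an adjacent block, i.e.\ density $0$, contradicting the hypothesis. Having also ruled out density $0$, each $w_J^{(p)}$ is genuinely nontrivial under any commutation arrangement. I would then show, by an argument parallel to Lemma~\ref{lem:density-typeA}, that after an appropriate global rearrangement, each $w_J^{(p)}$ for large $p$ takes the normal form $s_m s_{m-1} \cdots s_1$: the letter $s_1$, being the only $J$-letter not commuting with $s_v$, must anchor one end of the block, and induction establishes a descending run $s_a s_{a-1} \cdots s_1$; the case $a<m$ would allow an ``extra'' $s_m$ or $s_{m-1}$ to migrate between blocks via commutativity with $s_v$ and other-branch content, collapsing some block's $w_J$ to identity and contradicting density $1$, while $a=m$ is consistent with the 4-braid obstruction that prevents such migration. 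The main obstacle is this final step of Claim 2: rigorously justifying that $s_m s_{m-1} \cdots s_1$ is achieved globally (as an element of $W_J$), rather than some other length-$m$ reduced word in $W_J$; this requires a careful global rearrangement argument tracking how commutations within one block interact with the content of adjacent blocks, and the role of the 4-braid in pinning down the exact arrangement.
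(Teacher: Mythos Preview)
Your treatment of Claim~1 is essentially the paper's argument: both proceed by commuting extraneous initial letters past the left $s_v$, forcing a maximal ascending run $s_1\cdots s_k$, observing that any continuation with $s_j$ for $j<k$ produces a $3$-braid unless $k=m$ (the only place the $4$-term relation permits the ``turn''), and then forcing the descending run $s_{m-1}\cdots s_1$. Your case analysis for $w_J^{(p\pm 1)}=\mathrm{id}$ likewise matches the paper's.

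For Claim~2, however, your sketch misses the key mechanism, and the reasoning you do offer is not the correct one. The paper does \emph{not} argue that ``$a<m$ would allow migration collapsing some block to identity''; rather, it runs a \emph{building-up induction}. Starting from the first index $p$ after which the density is identically $1$, one shows (pushing excess letters either rightward or all the way back into block $p$) that $w_J^{(p+1)}=s_1$, then $w_J^{(p+2)}=s_2s_1$, and in general $w_J^{(p+k)}=s_k s_{k-1}\cdots s_1$ for $k=1,\ldots,m$. The inductive step is: having normalized block $p+k$ to $s_k\cdots s_1$, if block $p+k+1$ is put in the form $s_a\cdots s_1$, then $a\leq k$ yields a $3$-braid when $s_a$ is pushed left into block $p+k$, while $a\geq k+2$ allows $s_a$ to be pushed \emph{all the way} back through blocks $p+k,\ldots,p+1$ into block $p$ (it commutes with every letter there and with each $s_v$); iterating forces $a=k+1$. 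Only upon reaching $w_J^{(p+m)}=s_m\cdots s_1$ does one prove stabilization: the same analysis now gives $a=m$ for every subsequent block, and the $4$-braid relation is exactly what prevents the leftward-pushed $s_m$ from creating a braid inside the previous block. Your proposal attempts to jump directly to the stable phase and appeals to a ``collapse to identity'' that does not occur; the actual obstruction for $a<m$ is a braid, not a collapse, and it only bites once the preceding block has itself been built up to length at least $a$.
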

\begin{proof}
First assume that $d([\mathbf{i}],J)_p=2$. This means that commutation moves can be applied so that $s_vw_J^{(p)}s_v$ is fully commutative. We can assume that a reduced word of $w_J^{(p)}$ starts with $s_1s_2\cdots s_a$ for some $a\geq1$, since otherwise commutation moves can be applied to move simple generators across the left $s_v$. At the same time we cannot have $w_J^{(p)}=s_1s_2\cdots s_a$ since a Coxeter move $s_vs_1s_v$ would be available. Say $w_J^{(p)}$ starts with $s_1s_2\cdots s_as_k$ with $k<a$, In order for there not to be a Coxeter move of the form $s_ks_{k+1}s_k$, we must have $a=m$ and $k=m-1$. Likewise, $w_J^{(p)}$ cannot be $s_1s_2\cdots s_ms_{m-1}$ since $s_vs_1s_v$ is an available Coxeter move and the only $s_k$ that will not cause a Coxeter move to arrive when appended after $s_1s_2\cdots s_ms_{m-1}$ is $s_{m-2}$. Continue this process, we conclude that $w_J^{(p)}$ must start with $s_1s_2\cdots s_m\cdots s_1$. At this point, we see that no more simple generators can be appended so $w_J^{(p)}$ becomes $s_1s_2\cdots s_m\cdots s_1$ as desired. Moreover, if $w_J^{(p+1)}\neq\mathrm{id}$, let $s_k$ be a prefix of $w_J^{(p+1)}$. If $k\geq2$, we can move $s_k$ across $s_v$ into $w_J^{(p)}$ to create a Coxeter move $s_ks_{k-1}s_k$ when $k\neq m$ and a Coxeter move $s_{m-1}s_ms_{m-1}s_m$ when $k=m$; and if $k=1$, we have Coxeter move $s_vs_1s_v$. This means $w_J^{(p+1)}=\mathrm{id}$ and by symmetry, $w_J^{(p-1)}=\mathrm{id}$ as well. 

Notice that in the lemma statement, we say that in the case of $d([\mathbf{i}],J)=2$, $w_J^{(p)}=s_1\cdots s_m\cdots s_1$ without commutation moves needed. This is because no commutation moves can be applied to adjust $w_J^{(p-1)}=w_J^{(p+1)}=\mathrm{id}$ and $w_J^{(p)}=s_1\cdots s_m\cdots s_1$ relative to each other.

Moving on to the next case, we notice that if $d([\mathbf{i}],J)$ is eventually positive, then this sequence eventually takes on the value 1 since any $d([\mathbf{i}],J)_p$ being 2 results in the next term being 0. Assume that $d([\mathbf{i}],J)_{p+1}=d([\mathbf{i}],J)_{p+2}=\cdots=1$. We then use induction to show that commutation moves can be applied so that $w_J^{(p+k)}=s_k\cdots s_1$ for $k=1,\ldots,m$. The base case is $k=1$. Assume without loss of generality that $w_J^{(p+1)}$ starts with $s_1s_2\cdots s_a$ for some $a\geq1$. If $w_J^{(p+1)}=s_1s_2\cdots s_a$, we can use commutation moves to move $s_2\cdots s_a$ into $w_J^{(p+2)}$ so that $w_J^{(p)}$ becomes $s_1$. If there are more generators after $s_1s_2\cdots s_a$, the only $s_k$'s that cannot be moved past the $s_v$ on the left or create a Coxeter move is for $a=m$ and $k=m-1$. Arguing analogously, we end up with $w_J^{(p+1)}=s_1s_2\cdots s_m\cdots s_k$ for some $1\leq k\leq m$ when we see that no more generators can be added to avoid Coxeter moves. If $k=1$, from the case addressed in the last paragraph, we must have $d([\mathbf{i}],J)_{p+2}=0$ which is impossible and if $k>1$, we can move $s_2\cdots s_m\cdots s_k$ across the $s_v$ on the right into $w_J^{(p+2)}$. In this way, we make sure that $w_J^{(p+1)}$ becomes $s_1$. For the inductive step, we assume that $w_J^{(p+j)}$ has become $s_j\cdots s_1$ for $j\leq k$, $j<m$ and we consider $w_J^{(p+k+1)}$, which is not the identity. Assume without loss of generality (by moving generators across the $s_v$ on the right) that $w_J^{(p+k+1)}$ ends with $s_as_{a-1}\cdots s_1$ for some maximal $a\geq1$. If $w_J^{(p+k+1)}\neq s_a\cdots s_1$, then as above, we must have $a=m$, $a'<m$ and $w_J^{(p+k+1)}=s_{a'}\cdots s_m\cdots s_1$. But then $s_{a'}$ will create a Coxeter move with $w_J^{(p+k)}$, contradiction. Thus, we obtain $w_J^{(p+k+1)}=s_as_{a-1}\cdots s_1$ for some $a\geq1$. If $a\leq k$, then moving $s_a$ across the $s_v$ on the left into $w_J^{(p+k)}$ creates a Coxeter move and if $a\geq k+2$, we can move $s_a$ all the way into $w_J^{(p)}$ by induction hypothesis. In the end, we are left with $w_J^{(p+k+1)}=s_{k+1}\cdots s_1$ as desired. So the induction step goes through.

Now that we have $w_J^{(p+m)}=s_m\cdots s_1$, we show that assuming $d([\mathbf{i}],J)_{p+m+1}>0$, we can use commutation moves so that $d([\mathbf{i}],J)_{k}=s_m\cdots s_1$ for all $k\geq p+m$. We will be using commutation moves only to move the generators to the right so all previous $w_J^{(j)}$'s will be preserved, for $j\leq p+m$. Similarly assume that $w_J^{(p+m+1)}$ ends with $s_a\cdots s_1$ with maximal $a$. If $w_J^{(p+m+1)}\neq s_a\cdots s_1$, then let $s_{a'}$ be the simple generator left of $s_a$ with $a'<a+1$. Then $a'\neq a$ so we will have a Coxeter move $s_{a'}s_{a'+1}s_{a'}$, contradiction. So $w_J^{(p+m+1)}=s_a\cdots s_1$, and $a$ must be $m$ to avoid Coxeter moves with $w_J^{(p+m)}=s_m\cdots s_1$. This concludes the proof.
\end{proof}

\begin{lemma}\label{lem:density-typeD}
Let $J$ be a type $D_m$ branch labeled as in Figure~\ref{fig:fully-commutative-branch} and $[\mathbf{i}]$ be fully commutative. If $d([\mathbf{i}],J)_p=2$, then $w_J^{(p)}=s_1s_2\cdots s_{m-2}s_{m-1}s_{m}s_{m-2}\cdots s_1$, and $w_J^{(p-1)}=w_J^{(p)}=\mathrm{id}$. On the other hand, if the sequence $d([\mathbf{i}],J)$ is eventually 1, then commutation moves can be applied so that $w_J^{(2p+\epsilon)}=s_ms_{m-2}\cdots s_1$ and $w_J^{(2p+\epsilon+1)}=s_{m-1}s_{m-2}\cdots s_1$ for some $\epsilon\in\{0,1\}$ and sufficiently large $p$.
\end{lemma}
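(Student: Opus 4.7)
The plan is to mimic the structure of the proof of Lemma~\ref{lem:density-typeB} for the type $B_m$ branch, but with care for the forked end of the type $D_m$ Dynkin diagram at nodes $m-1$ and $m$. Throughout, I will repeatedly use the following local principle: any reduced subword of $\mathbf{i}$ of the form $\cdots s_v \, w_J^{(p)} \, s_v \cdots$ (or the interface with $w_J^{(p\pm 1)}$) must avoid creating any braid move, so if a simple reflection $s_k$ in $w_J^{(p)}$ is adjacent to an $s_v$ on the left but $k \neq 1$, one must commute it across $s_v$; and any prefix or suffix that is $s_1$ immediately next to $s_v$ produces the forbidden $s_v s_1 s_v$ braid. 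These two constraints force a very rigid combinatorial shape.

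\textbf{Step one:} handle the case $d([\mathbf{i}],J)_p = 2$. After commutation moves, $s_v w_J^{(p)} s_v$ is itself a fully commutative subword. As in the type $B$ case, assume without loss of generality that $w_J^{(p)}$ begins with $s_1 s_2 \cdots s_a$ for maximal such $a$; avoidance of braids $s_k s_{k+1} s_k$ when adjoining the next letter forces $a = m-2$ and the next letter to be either $s_{m-1}$ or $s_m$. Since $s_{m-1}$ and $s_m$ commute, both must appear consecutively (else we could strip the unused one and create $s_v s_1 s_v$), so $w_J^{(p)}$ begins with $s_1 s_2 \cdots s_{m-2} s_{m-1} s_m$ (up to swapping $s_{m-1}, s_m$). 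Repeating the same argument going backward from the right $s_v$ (the only letter that can follow without creating a braid at the fork is $s_{m-2}$, then $s_{m-3}$, etc.), I will conclude $w_J^{(p)} = s_1 s_2 \cdots s_{m-2} s_{m-1} s_m s_{m-2} \cdots s_1$. For the neighbors $w_J^{(p\pm 1)}$, any nonempty letter $s_k$ can be commuted adjacent to the $s_v$ separating it from $w_J^{(p)}$: $k=1$ creates $s_v s_1 s_v$, while $2 \le k \le m$ creates either a three-term or four-term braid with the explicit word for $w_J^{(p)}$ (noting the symmetric fork behavior $s_{m-2} s_{m-1} s_m s_{m-2}$), forcing $w_J^{(p\pm 1)} = \mathrm{id}$.

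\textbf{Step two:} handle the case where $d([\mathbf{i}],J)$ is eventually $1$. Since any value $2$ forces the next value to be $0$ by Step one, eventually positive means eventually $1$. Starting from some large $p$ with $d([\mathbf{i}],J)_{p+k}=1$ for all $k \ge 1$, I will mimic the inductive argument in Lemma~\ref{lem:density-typeB} to show that commutation moves can bring $w_J^{(p+k)}$ into the form $s_k s_{k-1} \cdots s_1$ for $k = 1, 2, \ldots, m-2$. The inductive step is the same: assuming the prior blocks are $s_j \cdots s_1$, I move letters of $w_J^{(p+k+1)}$ rightward across $s_v$ whenever possible, pin the suffix to $s_a \cdots s_1$ via braid-avoidance, then show $a \in \{k-1, k, k+2, \ldots\}$ all lead to a braid with the previous block, forcing $a = k+1$.

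\textbf{Step three:} analyze what happens at the fork, i.e.~from $w_J^{(p+m-2)} = s_{m-2} \cdots s_1$ onward. By the same suffix-pinning argument, $w_J^{(p+m-1)}$ must end with $s_a \cdots s_1$ for some $a$; braid-avoidance with the previous block $s_{m-2} \cdots s_1$ rules out $a \le m-3$ and $a = m-2$, while $a = m-2$ would also create $s_{m-2} s_{m-1} s_{m-2}$ or $s_{m-2} s_m s_{m-2}$ as one extends, so $a \in \{m-1, m\}$; without loss of generality $w_J^{(p+m-1)} = s_{m-1} s_{m-2} \cdots s_1$ or $s_m s_{m-2} \cdots s_1$. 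Then for $w_J^{(p+m)}$, the previous block uses exactly one of $s_{m-1}, s_m$, and the same analysis (together with $s_{m-1} s_m = s_m s_{m-1}$) forces $w_J^{(p+m)}$ to use the other one in the suffix $s_{m-1/m} s_{m-2} \cdots s_1$. Iterating, one finds that from that point on the blocks strictly alternate between $s_m s_{m-2} \cdots s_1$ and $s_{m-1} s_{m-2} \cdots s_1$, which is the claimed form with parity offset $\epsilon \in \{0,1\}$.

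The main obstacle is Step three: the two-letter fork $\{s_{m-1}, s_m\}$, both attached to $s_{m-2}$, gives slightly more freedom than in the type $B$ case and requires checking that one cannot use the same branch letter twice in a row without producing a braid $s_{m-1} s_{m-2} s_{m-1}$ or $s_m s_{m-2} s_m$ after commuting. Once the rigidity of the fork under braid-avoidance is pinned down, the bookkeeping is the same as in Lemma~\ref{lem:density-typeB}, and the statement follows.
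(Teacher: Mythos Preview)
Your proposal is correct and is exactly what the paper does: the paper's entire proof reads ``The proof of Lemma~\ref{lem:density-typeD} is exactly the same as Lemma~\ref{lem:density-typeB} so we skip the proof,'' and you have carried out precisely this adaptation, handling the forked end $\{s_{m-1},s_m\}$ in place of the single long-root node in type $B$. Two small slips in your write-up that you should clean up (neither is a real gap): in Step two, the case $a\ge k+2$ does not ``lead to a braid'' but rather allows $s_a$ to be pushed left past all prior blocks into $w_J^{(p)}$ (as in the type $B$ argument), and in Step one the phrase ``strip the unused one'' is confusing---the point is simply that if $w_J^{(p)}$ ended at $s_1\cdots s_{m-2}s_{m-1}$ (or $s_m$) the terminal $s_v$ could be commuted leftward all the way to the initial $s_1$, producing $s_vs_1s_v$.
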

The proof of Lemma~\ref{lem:density-typeD} is exactly the same as Lemma~\ref{lem:density-typeB} so we skip the proof.

Intuitively, Lemma~\ref{lem:density-typeB} and Lemma~\ref{lem:density-typeD} is saying that a type $B_m$ branch and a type $D_m$ branch both have ``density" at most 1. Surprisingly (or not surprisingly), the sum of the ``densities" of all the branches provided by Lemma~\ref{lem:density-typeA}, Lemma~\ref{lem:density-typeB} and Lemma~\ref{lem:density-typeD} are bounded above by exactly 2, which is also needed for being fully commutative via Lemma~\ref{lem:density-at-least-2}. For example, we can choose a branch point $v$ so that the type $\widetilde{E_8}$ Dynkin diagram has three branches of type $A_1$, $A_2$ and $A_5$, whose ``densities" are bounded via Lemma~\ref{lem:density-typeA} by $1/2+2/3+5/6=2$.

\begin{theorem}\label{thm:explicit-fully-commutative}
The following is the full list of all fully commutative infinite words:
\begin{table}[ht!]
\centering
\begin{tabular}{c|c|c}
type & reduced words & coweight \\\hline
$\widetilde{A_n}$ &  $c^\infty$ for any standard Coxeter element $c$ &  \\\hline
$\widetilde{B_n}$ & $w(s_2s_0s_1s_2s_3s_4\cdots s_ns_{n-1}\cdots s_4s_3)^\infty$ & $\omega_1^{\vee}$\\
& $w(s_2s_0s_3s_4\cdots s_ns_2s_1s_3s_4\cdots s_n)^\infty$ & $\omega_n^{\vee}$\\\hline
$\widetilde{C_n}$ & $w(s_1s_0s_1s_2s_3s_4\cdots s_ns_{n-1}\cdots s_3s_2)^\infty$ & $\omega_1^{\vee}$\\
& $w(s_0s_1\cdots s_{n-1}s_n)^\infty$ & $\omega_n^{\vee}$ \\\hline
$\widetilde{D_n}$ & $w(s_2s_0s_1s_2s_3s_3\cdots s_{n-2}s_{n-1}s_ns_{n-2}s_{n-3}\cdots s_4s_3)^\infty$ & $\omega_1^{\vee}$ \\
& $w(s_2s_0s_3s_4\cdots s_{n-2}s_{n-1}s_2s_1s_3s_4\cdots s_{n-2}s_n)^\infty$ & $\omega_{n-1}^{\vee}$\\
& $w(s_2s_1s_3s_4\cdots s_{n-2}s_{n-1}s_2s_0s_3s_4\cdots s_{n-2}s_n)^\infty$ & $\omega_{n}^{\vee}$\\\hline
$\widetilde{E_6}$ & $w(s_3s_2s_5s_4s_3s_1s_2s_6s_3s_4s_0s_6)^\infty$ & $\omega_6^{\vee}$\\
& $w(s_3s_2s_0s_6s_3s_1s_2s_4s_3s_5s_4s_6)^\infty$ & $\omega_1^{\vee}$ \\\hline
$\widetilde{E_7}$ & $w(s_3s_7s_5s_4s_3s_2s_6s_5s_4s_3s_7s_1s_2s_3s_0s_1s_2s_4)^\infty$ & $\omega_6^\vee$\\\hline
\end{tabular}
\caption{Fully commutative infinite reduced words}
\label{tab:fully-commutative}
\end{table}
where the labels of the generators are seen in Figure~\ref{fig:main-fig} and $w\in \widetilde{W}$ so that we can describe all fully commutative words.
\end{theorem}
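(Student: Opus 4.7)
The strategy is to combine the density bounds of Lemmas \ref{lem:density-at-least-2}, \ref{lem:density-typeA}, \ref{lem:density-typeB}, and \ref{lem:density-typeD} with the explicit construction of Proposition \ref{prop:explicitword}. The density bounds will force fully commutative infinite words to be asymptotically periodic with a very rigid block structure, and the explicit construction will identify the resulting periodic word with $t_{k_i\omega_i^{\vee}}^{\infty}$ for a specific minuscule or cominuscule coweight.

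First I would dispose of $\tilde{B_3}, \tilde{C_3}, \tilde{F_4}, \tilde{G_2}$ by direct inspection, as these admit no branch node in the sense defined in Section \ref{sec:densities}; in each case one checks by hand that no infinite fully commutative reduced word exists, matching their absence from the table. For $\tilde{A_n}$, any standard Coxeter product $c=s_{i_1}\cdots s_{i_{n+1}}$ has only commuting adjacent letters across cyclic shifts, so $c^{\infty}$ is visibly fully commutative, producing the first row of the table.

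For the remaining affine types, I would choose a branch node $v$ and classify $\tilde{S}\setminus\{v\}$ into type $A$, $B$, and $D$ branches as in Figure \ref{fig:fully-commutative-branch}. Lemmas \ref{lem:density-typeA}, \ref{lem:density-typeB}, \ref{lem:density-typeD} bound the average density of each branch by $m/(m+1)$, $1$, and $1$ respectively, while Lemma \ref{lem:density-at-least-2} forces the total average density to be at least $2$. A case-by-case verification shows that in each affine type appearing in the table the sum of the per-branch upper bounds equals exactly $2$, so \emph{every} branch must saturate its density bound. The saturation conclusions in Lemmas \ref{lem:density-typeA}, \ref{lem:density-typeB}, \ref{lem:density-typeD} then determine, up to commutations, the unique periodic block pattern that can appear in $\mathbf{i}$; concatenating these blocks yields the explicit periodic reduced word displayed in the corresponding row of the table. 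Conversely, applying Proposition \ref{prop:explicitword} to $\lambda=k_i\omega_i^{\vee}$ for the associated minuscule or cominuscule coweight and running the procedure until $\lambda^{(k)}=\lambda$ produces the same periodic word, simultaneously verifying that it is reduced and that its class equals $[t_{k_i\omega_i^{\vee}}^{\infty}]$; the overall prefix $w\in\tilde{W}$ accounts for the finite-length initial portion before the word stabilizes to the periodic regime, paralleling the prefix analysis from the proof of Lemma \ref{lem:fully-commutative-independent-of-translates}.

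The main obstacle is the case-by-case arithmetic: one must verify for each affine type that the chosen branch node yields density sum exactly $2$, that the block patterns forced by the saturation conditions on different branches are mutually compatible (since the branches interact through the single generator $s_v$ separating them), and that the resulting periodic string agrees with what Proposition \ref{prop:explicitword} outputs for the correct coweight. The existence of multiple minuscule or cominuscule coweights in $\tilde{B_n}, \tilde{C_n}, \tilde{D_n}, \tilde{E_6}$ corresponds to genuinely different choices of branch node, or to different choices of which branch attains the type-$B$ or type-$D$ maximal density, and I expect these cases to require the most delicate combinatorial bookkeeping.
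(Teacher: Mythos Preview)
Your approach is essentially the paper's: both derive the classification from the density lemmas by forcing each branch to saturate its bound and then reading off the unique compatible block pattern, with the paper likewise carrying out the position-by-position bookkeeping you flag as ``the main obstacle.''

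There is, however, a concrete error in your treatment of the small cases. You assert that for $\tilde{B_3}$ and $\tilde{C_3}$ ``no infinite fully commutative reduced word exists, matching their absence from the table,'' but this is false: $\tilde{B_3}$ and $\tilde{C_3}$ are not absent from the table, they are covered by the $\tilde{B_n}$ and $\tilde{C_n}$ rows, and each has two classes of fully commutative infinite words (for $\omega_1^{\vee}$ and $\omega_n^{\vee}$). The paper sets these types aside only because no branch node in the required sense exists there, not because the answer is empty; the hand check must still produce the words listed. Only $\tilde{F_4}$, $\tilde{G_2}$, and $\tilde{E_8}$ genuinely have no fully commutative infinite words.

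A minor secondary point: your heuristic that ``the sum of the per-branch upper bounds equals exactly $2$'' is suggestive but not by itself decisive. In $\tilde{E_8}$ the bounds also sum to $2$ (branches $A_1,A_2,A_5$ give $1/2+2/3+5/6=2$), yet the position-by-position analysis shows the saturation patterns on the three branches are mutually incompatible, so no fully commutative word exists. Thus the compatibility check you mention is not just bookkeeping but is where the actual content lies in the exceptional types; the averaged inequality alone cannot distinguish $\tilde{E_7}$ from $\tilde{E_8}$.
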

\begin{proof}
The theorem is a direct consequence of Lemma~\ref{lem:density-typeA}, Lemma~\ref{lem:density-typeB} and Lemma~\ref{lem:density-typeD}. The arguments are separate but largely similar in all types so we will discuss selective types to avoid extra tediousness. 

For $\widetilde{E_7}$ as labeled in Figure~\ref{fig:main-fig}, the branch point is $v=3$. Let $J_1=\{7\}$, $J_2=\{0,1,2\}$ and $J_3=\{4,5,6\}$. By Lemma~\ref{lem:density-typeA}, say  $w_{J_3}^{(p+1)}=\mathrm{id}$. Then by Lemma~\ref{lem:density-at-least-2}, we must have $d([\mathbf{i}],J_1)=d([\mathbf{i}],J_2)=1$. Then $w_{J_1}^{(p+1)}=s_7$ and $d([\mathbf{i}],J_1)_{p+2}=0$. This means $d([\mathbf{i}],J_2)=d([\mathbf{i}],J_3)=1$. If $d([\mathbf{i}],J_2)_{p+3}=1$, then $d([\mathbf{i}],J_2)_{p+4}=0$, $d([\mathbf{i}],J_1)_{p+4}=d([\mathbf{i}],J_3)=1$, $d([\mathbf{i}],J_1)_{p+3}=d([\mathbf{i}],J_1)_{p+5}=0$, $d([\mathbf{i}],J_3)_{p+3}=1$, $d(\mathbf{[i]},J_3)_{p+5}=0$, contradicting Lemma~\ref{lem:density-at-least-2} at $p+5$. Thus, $d([\mathbf{i}],J_2)_{p+3}=0$. And $d([\mathbf{i}],J_1)_{p+3}=d([\mathbf{i}],J_3)_{p+3}=1$, $d([\mathbf{i}],J_1)_{p+4}=0$, $d([\mathbf{i}],J_3)_{p+4}=1$. One can also fill in the specific reduced word by Lemma~\ref{lem:density-typeA} to complete one full period (see Table~\ref{tab:E7-fully-commutative}). Continue this process and we obtain the explicit reduced word shown in Table~\ref{tab:fully-commutative}.
\begin{table}[ht!]
\centering
\begin{tabular}{c|cccc|c}
& $p+1$ & $p+2$ & $p+3$ & $p+4$ & $\cdots$ \\\hline
$w_{J_1}$ & $s_7$ & $\mathrm{id}$ & $s_7$ & $\mathrm{id}$ & $s_7$ \\
$w_{J_2}$ & $s_1s_2$ & $s_0s_1s_2$ & $\mathrm{id}$ & $s_2$ & $s_1s_2$ \\
$w_{J_3}$ & $\mathrm{id}$ & $s_4$ & $s_5s_4$ & $s_6s_5s_4$ & $\mathrm{id}$
\end{tabular}
\caption{Fully commutative infinite words for $\widetilde{E_7}$}
\label{tab:E7-fully-commutative}
\end{table}
The same argument for $\widetilde{E_7}$ also works for $\widetilde{E_6}$, $\widetilde{D_4}$ and can eliminate all possibilities in $\widetilde{E_8}$.

For $\widetilde{C_n}$, $n\geq4$ as labeled in Figure~\ref{fig:main-fig}, we can choose any branch point $v$ from $\{2,3,\ldots,n-2\}$. Once a branch point $v$ is fixed, the two branches $J_1=\{0,1,\ldots,v-1\}$ and $J_2=\{v+1,\ldots,n\}$ are both type $B$ branches. If $d([\mathbf{i}],J_1)_p=0$ for some $p$, then we necessarily have $d([\mathbf{i}],J_2)_p=2$ with $w_{J_2}^{(p)}=s_{v+1}\cdots s_n\cdots s_{v+1}$ given by Lemma~\ref{lem:density-typeB}. Consequently, $d([\mathbf{i}],J_2)_{p+1}=0$, $d(\mathbf{i},J_1)_{p+1}=2$ with $w_{J_2}^{(p+1)}=s_{v-1}\cdots s_1s_0s_1\cdots s_{v-1}$. Continuing this process, we see that $[\mathbf{i}]=w(s_0s_1\cdots s_n\cdots s_1)^\infty$, which corresponds to the coweight $\omega_1^{\vee}$. As for the other case, if $d([\mathbf{i}],J_1)_p>0$ for all $p$, then we must have $d([\mathbf{i}],J_1)_p=d([\mathbf{i}],J_2)_p=1$ for all $p$. By Lemma~\ref{lem:density-typeB}, we can use commutation moves so that $w_{J_1}^{(p)}=s_1\cdots s_{v-1}$ and $w_{J_2}^{(p)}=s_{v+1}\cdots s_{n}$. Now $[\mathbf{i}]=wc^\infty$ for a standard Coxeter element $c$, which corresponds to the coweight $\omega_n^{\vee}$. The same argument also works for $\widetilde{D_n}$ and $\widetilde{B_n}$, together with Lemma~\ref{lem:density-typeD}.
\end{proof}

\section*{Acknowledgements}
We are grateful to Alex Postnikov for helpful suggestions and especially to Thomas Lam for introducing us to these problems and for sharing many ideas.

\bibliographystyle{plain}
\bibliography{main}
\end{document}